\newcommand{\smallmat}[4]{\left(\begin{smallmatrix}#1&#2\\#3&#4\end{smallmatrix}\right)}
\newcommand\footnoteref[1]{\protected@xdef\@thefnmark{\ref{#1}}\@footnotemark}
\newcommand{\Q}{\mathbb{Q}}
\newcommand{\Z}{\mathbb{Z}}
\newcommand{\F}{\mathbb{F}}
\newcommand{\Qbar}{{\overline{\mathbb Q}}}
\newcommand{\Kbar}{{\overline{K}}}
\DeclareMathOperator{\GL}{GL}
\DeclareMathOperator{\SL}{SL}
\DeclareMathOperator{\tors}{tors}
\newtheorem{tm}{Theorem}[section]
\newtheorem{proposition}[tm]{Proposition}
\newtheorem{lemma}[tm]{Lemma}
\newtheorem{conjecture}[tm]{Conjecture}
\theoremstyle{definition}
\newtheorem{definition}{Definition}
\theoremstyle{remark}
\newtheorem{remark}[tm]{Remark}
\DeclareMathOperator{\Gal}{Gal}
\DeclareMathOperator{\im}{im}
\newcommand{\Aut}{\operatorname{Aut}}
\title[An algorithm for determining torsion growth of elliptic curves]{An algorithm for determining torsion growth of elliptic curves}
\author{Enrique Gonz\'alez--Jim\'enez}
\address{Universidad Aut{\'o}noma de Madrid, Departamento de Matem{\'a}ticas, Madrid, Spain}
\email{enrique.gonzalez.jimenez@uam.es}
\author{Filip Najman}
\address{University of Zagreb, Bijeni\v{c}ka cesta 30, 10000 Zagreb, Croatia}
\email{fnajman@math.hr}
\thanks{The first author was partially supported by the grant PGC2018--095392--B--I00 (MCIU/AEI/FEDER, UE). The second author gratefully acknowledges support from the QuantiXLie Center of Excellence, a project
co-financed by the Croatian Government and European Union through the
European Regional Development Fund - the Competitiveness and Cohesion
Operational Programme (Grant KK.01.1.1.01.0004) and by the Croatian Science Foundation under the project no. IP-2018-01-1313.}
\date{\today}
\keywords{Elliptic curves, torsion over number fields}
\subjclass[2010]{11G05}
\begin{document}

\begin{abstract}
We present a fast algorithm that takes as input an elliptic curve defined over $\Q$ and an integer $d$ and returns all the number fields $K$ of degree $d'$ dividing $d$ such that $E(K)_{\tors}$ contains $E(F)_{\tors}$ as a proper subgroup, for all $F \varsubsetneq K$. We ran this algorithm on all elliptic curves of conductor less than 400.000 (a total of 2.483.649 curves) and all $d \leq 23$ and collected various interesting data. In particular, we find a degree 6 sporadic point on $X_1(4,12)$, which is so far the lowest known degree a sporadic point on $X_1(m,n)$, for $m\geq 2$.
\end{abstract}

\maketitle

\section{Introduction}

Let $E$ be an elliptic curve defined over a number field $K$. The Mordell--Weil Theorem states that the set $E(K)$ of $K$-rational points is a finitely generated abelian group. Denote by $E(K)_{\tors}$ the torsion subgroup of $E(K)$. One of the main goals in the theory of elliptic curves is to determine $E(K)_{\tors}$, or in more generality, all possible torsion groups of all elliptic curves over all number fields of a given degree.

Let $d$ a positive integer and $\Phi(d)$ be the set of groups, up to isomorphism, that occur as torsion groups of some elliptic curve defined over a number field of degree $d$. Note that the set $\Phi(d)$ is finite thanks to Merel's uniform boundedness theorem \cite{merel}. These sets have so far been determined for only\footnote{M. Derickx, A. Etropolski, M. van Hoeij, J. Morrow and D. Zureick-Brown have announced results for $d=3$.} $d\leq 2$  \cite{maz,kam,km}. For degree $d=1,2$, each group in $\Phi(d)$ occurs for infinitely many $\Qbar$-isomorphism classes of elliptic curves, but for $d=3$ this is not the case (see \cite[Theorem 1]{naj} and \cite[Theorem 3.4]{jks}). Therefore we define $\Phi^\infty(d)\subseteq \Phi(d)$ to be the set of groups that arise for infinitely many $\Qbar$-isomorphism classes of elliptic curves. While $\Phi(d)$ is not completely known even for $d=3$, $\Phi^\infty(d)$ is known for $d \le 6$ \cite{jks,jkp-quartic,DS16}.

A slightly different approach is to consider only elliptic curves over $\Q$ under base change to number fields of a given degree. Let $d$ be a positive integer and $\Phi_\Q(d)\subseteq \Phi(d)$ be the set of groups, up to isomorphism, that occur as the torsion group $E(K)_{\tors}$ of an elliptic curve $E$ defined over $\Q$ base changed to a number field $K$ of degree $d$. Notice that $\Phi_\Q(d)$ does not have to be contained in $\Phi^\infty(d)$, as the group $\Z/21\Z$ shows\footnote{The second author showed in \cite{naj} that the elliptic curve with LMFDB label \href{http://www.lmfdb.org/EllipticCurve/Q/162b1}{\texttt{162.c3}} has torsion subgroup $\Z/21\Z$ defined over the cubic field $\Q(\zeta_{9})^+ = \Q(\zeta_9+\zeta_9^{-1})$ where $\zeta_9$ is a primitive $9$-th root of unity.} for $d=3$, and $\Phi^\infty(d)$ does not have to be contained in $\Phi_\Q(d)$ as the group $\Z/15\Z$ shows for $d=2$ (see \cite[Theorem 1]{naj} and \cite{km}).

Similarly, for a fixed $G \in \Phi(1)$, let $\Phi_\Q(d,G)$ be the subset of $\Phi_\Q(d)$ consisting of all possible torsion groups $E(K)_{\tors}$ of an elliptic curve $E$ defined over $\Q$ such that $E(\Q)_{\tors}= G$ base changed to $K$, a number field of degree $d$. The sets $\Phi_\Q(d)$ and $\Phi_\Q(d,G)$, for any $G \in \Phi(1)$, have been completely determined for $d=2,3,4,5,7$ in a series of papers \cite{naj, GJT14,GJT15,GJNT15,chou, GN?,GJ17}. Moreover, in \cite{GN?} it has been established  that $\Phi_\Q(d)=\Phi(1)$ for any positive integer $d$ whose prime divisors are greater than $7$.

Let $E$ be an elliptic curve defined over $\Q$ and let $K$ a number field. We say that there is {\it torsion growth over $K$} if $E(\Q)_{\tors}\subsetneq E(K)_{\tors}$. One can easily work out that there is torsion growth (of the $2$-primary torsion) in at least one number field of degree $2$, $3$, or $4$. On the other hand, there is no torsion growth in number fields of degree only divisible by primes $>7$ (cf. \cite[Theorem 7.2(i)]{GN?}).

The purpose of this paper is to develop a fast algorithm, usable in practice, which for a given elliptic curve $E$ defined over $\Q$ and a positive integer $d$ finds all the pairs $(K,H)$ where $K$ is a number field of degree dividing $d$ and $E(K)_{\tors}\simeq H \supsetneq E(\Q)_{\tors}$. Of course, the set of such number fields can be infinite if there exists a number field $F$ of degree $d'$, where $d'$ divides $d$ and $d'<d$ such that $E(F)_{\tors}\supsetneq E(\Q)_{\tors}$; then every number field $K\supseteq F$ of degree $d$ will have the desired property. To circumvent this problem, we will say that $E$ has \textit{primitive torsion growth} over a number field $K$ if $E(F)_{\tors} \subsetneq E(K)_{\tors}$, for all subfields $F\subsetneq K$. For a prime $\ell$ we say that $E$ has \textit{primitive $\ell$-power torsion growth} if $E(F)[\ell^\infty] \subsetneq E(K)[\ell^\infty]$, for all subfields $F\subsetneq K$.

It is an easy corollary of Merel's theorem \cite{merel} that for a given integer $d$ the list of number fields where primitive torsion growth occurs will be finite. The existence of such an algorithm is obvious: for every integer $d$, by the aforementioned theorem of Merel, there exists an effective bound $B_d$ such that $\#E(K)_{\tors}\leq B_d$. So to determine the number fields $F$ where torsion growth occurs one does the following:\\
\indent $\bullet$ For all prime powers $\ell^n\leq B_d$ do:\\
\indent \indent $\bullet$ factor the $\ell^n$-th division polynomial $\psi_{\ell^n}$ and check whether there are any irreducible factors of degree $d'$ dividing $d$.\\
\indent \indent  $\bullet$ If no, move on to the next prime power. If yes, for all irreducible factors $f$ of degree $d' \,|\, d$ do:
\begin{itemize}
\item[-]  Construct the number field $F$ whose minimal polynomial is $f$ - this will be the field of definition of the $x$-coordinate of a $\ell^n$-torsion point $P$ of $E$.
\item[-]  Check whether $P$ is defined over $F$, if yes add $F$ to the set that will be the output. If $P$ is not defined over $F$, then check whether $2d'$ divides $d$, if yes, then add $\Q(P)$ (which will be obtained from $F$ by adjoining the $y$-coordinate of $P$ to $F$) to the output set.

\item[-] If a point of order $\ell^n$ was constructed in the previous step, check whether the full $\ell^n$-torsion of $E$ is defined over a number field of degree dividing $d$, by checking whether the degree of the splitting field of $\psi_{\ell^n}$ or an appropriate degree 2 extension divides $d$.
\end{itemize}

However, if implemented as stated above, this algorithm would not be very useful in practice. The main obstacle would be factoring division polynomials, as $\psi_n$ is a polynomial of degree $\frac{n^2-1}{2}$ for $n$ odd, and the values $n$ that need to be checked will grow exponentially in $d$.

Our algorithm will use information that can be obtained from the images of mod $n$ Galois representations attached to $E$ to avoid factoring division polynomials wherever possible. To make the algorithm usable in practice we will add a number of if-then conditions that will rule out most of the integers $n$ that need to be checked using results from \cite{GN?} and results that we develop for this purpose in Section \ref{sec:if-then}. 

One of the main motivations of this paper is to run the algorithm on all elliptic curves of conductor less than 400.000 (see \cite{cremonaweb,lmfdb}) and for each curve within determine all the number fields of degree $\le 23$ over which there is primitive torsion growth. In Section \ref{sec:lmfdb}  we present the most interesting data coming out of these computations.  The main results appear in Table \ref{table}. We obtain sets contained in $\Phi_\Q(d)$ for $d\le 23$ and our data motivates us to conjecture that we have in fact obtained all of $\Phi_\Q(d)$ for $d\le 23$ (see Conjecture \ref{main_conjecture}). We can also see that there is much more torsion growth and it is much more complex when $d$ is divisible by powers of $3$ and especially $2$. Moreover we find two elliptic curves defined over $\Q$ with torsion $\Z/4\Z\times \Z/12\Z$ over a degree 6 number field and prove that these are the only two such curves. By \cite{DS16}, there are only finitely many elliptic curves over sextic fields (without supposing that they are defined over $\Q$) with this torsion group, so these curves give us examples of sporadic points of degree 6 on $X_1(4,12)$. This is the lowest known degree of a sporadic point on a modular curve $X_1(m,n)$, for $m|n$ and $m\geq 2$. 

\subsubsection*{Notation} Specific elliptic curves mentioned in this paper will be referred to by their LMFDB label and a link to the corresponding \href{http:/www.lmfdb.org/EllipticCurve/Q/}{LMFDB} page \cite{lmfdb} will be included for the ease of the reader. Conjugacy classes of subgroups of $\GL_2(\Z/\ell \Z)$ will be referred to by the labels introduced by Sutherland in \cite[\S 6.4]{Sutherland2}. We write $G=H$ (or $G\leq H$) for the fact that $G$ is isomorphic to $H$ (or to a subgroup of $H$ resp.) without further detail on the precise isomorphism. 

\section{Auxiliary results}
In this section, we prove a series of results that will make it possible to replace costly factorizations of division polynomials by simple if-then checks. This will be useful in the computations described in Section \ref{sec:lmfdb}.

Let $E$ be an elliptic curve defined over a number field $K$, $n$ a positive integer and $\Kbar$ a fixed algebraic closure of $K$. The absolute Galois group $G_K:=\Gal(\Kbar/K)$ acts on $E[n]$, inducing a \textit{mod $n$ Galois representation attached to $E$}
$$
\overline{\rho}_{E,n}\,:\, G_K\longrightarrow \Aut(E[n]).
$$
Fixing a basis $\{P,Q\}$ of $E[n]$, we identify $ \Aut(E[n])$ with $\GL_2(\Z/n\Z)$. Therefore we can view $\overline{\rho}_{E,n}(G_K)$ as a subgroup of $\GL_2(\Z/n\Z)$, determined uniquely up to conjugacy in $\GL_2(\Z /n\Z)$, and denoted by $G_E(n)$ from now on.

For elliptic curves over $\Q$, we conjecturally (see \cite[Conjecture 1.1]{Sutherland2} and \cite[Conjecture 1.12.]{zyw}) know all the mod $\ell$ Galois representations attached to non-CM elliptic curves over $\Q$.

\begin{conjecture}
\label{SUC}
Let $E/\Q$ be a non-CM elliptic curve, $\ell\geq 17$ a prime and $(\ell, j_E)$ not in the set
$$
\left\{(17,-17\cdot 373^3/2^{17}), (17,-17^2\cdot 101^3/2), (37,-7\cdot 11^3), (37, -7\cdot 137^3\cdot2083^3)  \right \},
$$
then $G_E(\ell)=\GL_2(\F_\ell)$.
\label{serre_conj}
\end{conjecture}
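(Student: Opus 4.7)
The plan is to apply the classical Dickson--Serre classification of the maximal subgroups of $\GL_2(\F_\ell)$. For $\ell \geq 5$, if $G_E(\ell) \neq \GL_2(\F_\ell)$, then $G_E(\ell)$ is contained in one of the following: (i) a Borel subgroup, (ii) the normalizer of a split Cartan subgroup, (iii) the normalizer of a non-split Cartan subgroup, or (iv) an exceptional subgroup, i.e.\ one whose projective image in $\mathrm{PGL}_2(\F_\ell)$ is isomorphic to $A_4$, $S_4$, or $A_5$. The strategy is to eliminate each of these possibilities for $\ell \geq 17$ and non-CM $E/\Q$, leaving only the four exceptional pairs $(\ell, j_E)$ listed in the statement.

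Cases (i), (ii) and (iv) should be handled by appealing to existing deep results. Case (i) is equivalent to $E$ admitting a $\Q$-rational $\ell$-isogeny; the Mazur--Kenku classification of rational points on $X_0(\ell)$ forces $\ell \in \{17,19,37,43,67,163\}$ when $\ell \geq 17$, and for each such $\ell$ the finitely many non-CM $j$-invariants can be explicitly enumerated. Discarding those $j$-invariants at which the full mod-$\ell$ image is not forced into a Borel (i.e.\ where the image is in fact all of $\GL_2(\F_\ell)$) isolates exactly the four exceptional pairs in the statement. Case (iv) is ruled out by Serre's bounds on when $\mathrm{PGL}_2(\F_\ell)$ can contain $A_4$, $S_4$ or $A_5$ compatibly with the cyclotomic-character condition on the determinant; these constraints already fail for $\ell \geq 17$. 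Case (ii) is eliminated by the Bilu--Parent and Bilu--Parent--Rebolledo theorems, which show that for $\ell \geq 11$, $\ell \neq 13$, the rational points of $X_{\mathrm{sp}}^+(\ell)$ consist only of cusps and CM points.

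The main obstacle, and indeed the reason the statement is still formulated as a conjecture, is case (iii). Eliminating it amounts to showing that $X_{\mathrm{ns}}^+(\ell)(\Q)$ consists only of cusps and CM points for every $\ell \geq 17$. This is Serre's uniformity question in its sharpest form, and it has been resolved only up to $\ell = 13$, the $\ell = 13$ case being the recent quadratic Chabauty computation of Balakrishnan--Dogra--M\"uller--Tuitman--Vonk; for $\ell \geq 17$ it is wide open. A realistic plan of attack would proceed prime by prime, combining quadratic Chabauty with Mordell--Weil sieve methods on $X_{\mathrm{ns}}^+(\ell)$ whenever the rank and genus allow, and Runge-type bounds otherwise. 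Thus the entire proof reduces, modulo the standard inputs above, to this single case, which is the central and presently inaccessible difficulty.
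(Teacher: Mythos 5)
This statement is Conjecture~\ref{serre_conj} in the paper, and the paper offers no proof of it: it is cited verbatim from Sutherland and Zywina as an open conjecture (sometimes called Serre's uniformity conjecture over $\Q$ in its refined form), and the algorithm in the paper is designed to work in practice \emph{assuming} it, or to verify the mod-$\ell$ image directly when it does not hold. So there is no proof in the paper to compare your argument against.

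That said, your assessment of the situation is essentially correct and matches the state of the art. You correctly decompose the problem via Dickson's classification of maximal subgroups of $\GL_2(\F_\ell)$, handle the Borel case via Mazur--Kenku and Zywina's explicit list of $j$-invariants with rational $\ell$-isogenies for $\ell\ge 17$ (which is exactly where the four listed pairs come from, via \cite[Theorem 1.10]{zyw}), handle the exceptional projective images via Serre's determinant/cyclotomic-character constraint, handle the split Cartan normalizer via Bilu--Parent and Bilu--Parent--Rebolledo, and correctly identify the non-split Cartan normalizer case as the one genuinely open point. Two small calibration remarks: the Bilu--Parent result already covers all primes $\ell\ge 7$, $\ell\neq 13$, not just $\ell\ge 11$; and the quadratic Chabauty resolution of $X_{\mathrm{ns}}^+(13)$ by Balakrishnan--Dogra--M\"uller--Tuitman--Vonk is indeed the current frontier, so for $\ell\ge 17$ the non-split Cartan case remains open. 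In short: you have correctly recognized that no proof exists and sketched accurately why; what you wrote is a sound survey of the reduction, not a proof, and that is exactly the honest conclusion here.
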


For a prime $\ell$, $\rho_{E,\ell}:G_K \rightarrow \GL_2(\Z_\ell)$ will denote the $\ell$-adic representation attached to $E$ (again we assume that we have fixed a basis for the Tate module $T_\ell(E)$). We say that the $\ell$-adic representation of $E$ is \textit{defined modulo $\ell^n$} if for all $m\geq n$ we have $G_E(\ell^{m+1})\geq I_2+\ell^mM_2(\Z/\ell^{m+1}\Z)$, where $I_2$ is the identity matrix.

\begin{proposition}
\label{prop:divisibility}
Let $E$ be an elliptic curve defined over a number field $K$ such that its $\ell$-adic representation is defined modulo $\ell^n$. Then for any point $P\in E(\Kbar)$ of order $\ell^{n+1}$, we have $[K(P):K(\ell P)]=\ell^2$.
\end{proposition}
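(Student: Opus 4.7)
The plan is to compute $[K(P):K(\ell P)]$ by studying the homomorphism $\sigma \mapsto \sigma(P) - P$ from an appropriate Galois group to $E[\ell]$, establishing an upper bound and then using the hypothesis on the $\ell$-adic image to force equality.

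First I would fix a basis $\{P_0, Q_0\}$ of $E[\ell^{n+1}]$ and write $P = aP_0 + bQ_0$; since $P$ has order exactly $\ell^{n+1}$, not both of $a, b$ are divisible by $\ell$. Note that $\ell P \in E[\ell^n]$, so $K(\ell P) \subseteq K(E[\ell^n]) \subseteq K(E[\ell^{n+1}])$. Any $\sigma \in \operatorname{Gal}(K(E[\ell^{n+1}])/K(\ell P))$ satisfies $\ell\bigl(\sigma(P) - P\bigr) = \sigma(\ell P) - \ell P = 0$, so $\sigma(P) - P \in E[\ell]$. The map $\phi\colon \sigma \mapsto \sigma(P) - P$ is a homomorphism into $E[\ell]$ whose kernel is exactly $\operatorname{Gal}(K(E[\ell^{n+1}])/K(P))$. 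Passing to the quotient gives an injection
\[
\operatorname{Gal}(K(P)/K(\ell P)) \hookrightarrow E[\ell] \cong (\Z/\ell\Z)^2,
\]
so $[K(P):K(\ell P)]$ divides $\ell^2$.

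Next I would use the hypothesis to show $\phi$ is surjective. Any element $\tau \in I_2 + \ell^n M_2(\Z/\ell^{n+1}\Z) \subseteq G_E(\ell^{n+1})$ acts trivially on $E[\ell^n]$, hence fixes $\ell P$, so such $\tau$ lies in the domain of $\phi$. If $\tau$ corresponds to the matrix $I_2 + \ell^n M$ with $M \in M_2(\Z/\ell^{n+1}\Z)$, then in coordinates with respect to $\{P_0, Q_0\}$,
\[
\tau(P) - P = \ell^n \cdot M \begin{pmatrix} a \\ b \end{pmatrix} \pmod{\ell^{n+1}}.
\]
Reducing mod $\ell$, surjectivity of $\phi|_{I_2 + \ell^n M_2}$ reduces to the linear algebra statement that the map $M_2(\F_\ell) \to \F_\ell^2$, $M \mapsto M\binom{a}{b}$, is surjective whenever $(a,b) \not\equiv (0,0) \bmod \ell$, which follows by writing down an explicit preimage (e.g., if $a \not\equiv 0 \pmod \ell$, take $M = \smallmat{x/a}{0}{y/a}{0}$ to hit $(x,y)$).

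Combining the upper and lower bounds yields $[K(P):K(\ell P)] = \ell^2$. The only non-trivial step is the surjectivity calculation, but this is just a straightforward verification; the main conceptual point is recognizing that the defining hypothesis on the $\ell$-adic image gives exactly the supply of Galois elements needed to move $P$ over all $\ell^2$ lifts of $\ell P$.
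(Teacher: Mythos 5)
Your proof is correct and takes essentially the same approach as the paper: both arguments reduce to showing that the subgroup $I_2+\ell^n M_2(\Z/\ell^{n+1}\Z)$, which by hypothesis lies in $G_E(\ell^{n+1})$, acts transitively on the $\ell^2$ solutions of $\ell X=\ell P$, forcing $[K(P):K(\ell P)]=\ell^2$ by orbit--stabilizer. The paper normalizes coordinates so that $\ell P\mapsto(\ell,0)$ and then exhibits two explicit matrices generating the transitive action, whereas you keep $P=aP_0+bQ_0$ arbitrary and instead show directly that $M\mapsto M\binom{a}{b}$ is surjective mod $\ell$ whenever $(a,b)\not\equiv(0,0)$; these are the same computation in different clothing. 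One minor point worth tightening: the map $\sigma\mapsto\sigma(P)-P$ on the full group $\Gal(K(E[\ell^{n+1}])/K(\ell P))$ is only a $1$-cocycle, not a homomorphism, since that group need not act trivially on $E[\ell]$; you should either phrase the upper bound as orbit-counting, or observe that the cocycle still induces a well-defined injection on cosets of the stabilizer of $P$. (Restricted to $I_2+\ell^n M_2$ it genuinely is a homomorphism, since that subgroup fixes $E[\ell]\subseteq E[\ell^n]$ pointwise, so your surjectivity computation is fine as written.)
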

\begin{proof}
We need to prove that $I_2+\ell M_2(\Z/\ell^{n+1}\Z)$ acts transitively on the solutions of $\ell X=P$ (where the action of $I_2+\ell^nM_2(\Z/\ell^{n+1}\Z)$ on the $\Z/\ell^n\Z$-module of the solutions of $\ell X=P$ is defined in the obvious way). The $G_K$-module $E[\ell^{n+1}]$ is isomorphic to $(\Z/\ell^{n+1}\Z)^2$, and we choose an isomorphism sending $P$ to $(\ell,0)$ and study the action of $I_2+\ell M_2(\Z/\ell^{n+1}\Z)$ on the $\ell^2$ solutions of the equation $\ell X=(\ell,0)$. One easily sees that already the subgroup of $I_2+\ell^n M_2(\Z/\ell^{n+1}\Z)$ generated by $\smallmat{1}{\ell^n}{0}{1}$ and $\smallmat{\ell^n+1}{0}{0}{1}$
acts transitively on the solutions of the equation $\ell X=(\ell,0)$.
\end{proof}

For easier reference we state and prove the following lemma which will follow from standard group-theoretic arguments.

\begin{lemma}\label{lem:index}
Let $E$ be an elliptic curve without $CM$ defined over a number field $K$ and $\ell\geq 5$ a prime such that $v_\ell([\GL_2(\Z_\ell):\rho_{E,\ell}(G_\Q)])=n$. 
  Then the $\ell$-adic Galois representation of $E/K$ is defined modulo $\ell^m$ for some $m\leq n+1$.
\end{lemma}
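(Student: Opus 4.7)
The plan is to analyze the natural filtration of $G := \rho_{E,\ell}(G_K)$ by the subgroups $G_k := G \cap (I_2 + \ell^k M_2(\Z_\ell))$ for $k \geq 1$. The reduction map identifies the quotient $(I_2 + \ell^k M_2(\Z_\ell))/(I_2 + \ell^{k+1} M_2(\Z_\ell))$ with the additive group $M_2(\F_\ell)$, and hence embeds $V_k := G_k/G_{k+1}$ as an $\F_\ell$-subspace of the $4$-dimensional space $M_2(\F_\ell)$. The goal will be to understand when $V_k$ is everything.

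First I would establish the key inclusion $V_k \subseteq V_{k+1}$ for $k \geq 1$. For $\ell \geq 3$, the binomial expansion gives $(I_2 + \ell^k X)^\ell \equiv I_2 + \ell^{k+1} X \pmod{\ell^{k+2}}$, because every term $\binom{\ell}{j}(\ell^k X)^j$ with $j \geq 2$ has $\ell$-adic valuation at least $1 + 2k \geq k+2$. Since $G$ is closed under $\ell$-th powers, this sends the class $X \bmod \ell$ in $V_k$ to the same class in $V_{k+1}$, giving an ascending chain $V_1 \subseteq V_2 \subseteq \cdots \subseteq M_2(\F_\ell)$.

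Next, since $I_2 + \ell^j M_2(\Z_\ell)$ is normal in $\GL_2(\Z_\ell)$, a telescoping argument yields
\[
v_\ell\bigl([\GL_2(\Z_\ell) : G]\bigr) \;=\; v_\ell\bigl([\GL_2(\F_\ell) : \bar G]\bigr) \;+\; \sum_{k \geq 1}\bigl(4 - \dim_{\F_\ell} V_k\bigr),
\]
where $\bar G$ is the image of $G$ modulo $\ell$. Since the left-hand side equals the finite integer $n$, the sum must be finite, so $\dim V_k = 4$ for all sufficiently large $k$. Let $m$ be minimal with $V_m = M_2(\F_\ell)$. For each $k < m$ one has $4 - \dim V_k \geq 1$, so $m - 1 \leq n$, i.e.\ $m \leq n+1$.

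Finally, I would check that $V_k = M_2(\F_\ell)$ for every $k \geq m$ is precisely the statement that the $\ell$-adic representation is defined modulo $\ell^m$: by the ascending chain, $V_k \supseteq V_m = M_2(\F_\ell)$ for $k \geq m$, which means $G_k$ surjects onto $(I_2 + \ell^k M_2(\Z_\ell))/(I_2 + \ell^{k+1} M_2(\Z_\ell))$, so the image of $G$ in $\GL_2(\Z/\ell^{k+1}\Z)$ contains $I_2 + \ell^k M_2(\Z/\ell^{k+1}\Z)$ as required by the definition. The main point of concern is the $\ell$-th power computation of the first step, which needs $\ell$ odd; the hypothesis $\ell \geq 5$ (and the non-CM assumption) do not intervene in any other essential way, non-CM being already guaranteed by the finiteness of $n$ via Serre's open image theorem.
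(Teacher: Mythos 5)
Your proof is correct, and the core mechanism is the same as the paper's: both proofs hinge on the fact that once the full kernel $I_2+\ell^k M_2$ is captured at one level, it is captured at all subsequent levels, and both then observe that at each level where this fails the $\ell$-adic valuation of the index drops by at least one, so $n$ bounds the number of ``bad'' levels. The paper invokes this stability fact by citing Serre (Lemma~3, IV-23 of the Abelian $\ell$-adic representations book) and then argues by contradiction; you instead re-derive the stability via the binomial computation $(I_2+\ell^k X)^\ell \equiv I_2+\ell^{k+1}X \pmod{\ell^{k+2}}$ and package the counting as a clean telescoping identity
\[
v_\ell\bigl([\GL_2(\Z_\ell):G]\bigr)=v_\ell\bigl([\GL_2(\F_\ell):\bar G]\bigr)+\sum_{k\ge 1}\bigl(4-\dim_{\F_\ell}V_k\bigr).
\]
This buys you a self-contained argument that makes the role of each hypothesis transparent: you are correct that $\ell\ge 3$ (rather than $\ell\ge 5$) is what the binomial step needs, and that the non-CM assumption is only there to make the index finite so the sum is finite. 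The two minor cosmetic points worth noting: the statement mixes $E/K$ with $\rho_{E,\ell}(G_\Q)$ (a slip already present in the paper), and one should make explicit that $V_m=M_2(\F_\ell)$ for the minimal such $m$ together with your ascending-chain inclusion yields $V_k=M_2(\F_\ell)$ for all $k\ge m$, which is precisely what ``defined modulo $\ell^m$'' requires; you do say this, so the argument is complete.
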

\begin{proof}
Define $V_k:=I_2+\ell^kM_2(\Z/\ell^{k+1}\Z)$ and $G:=\rho_{E,\ell}(G_\Q)$. 
Let $\rho_k :\GL_2(\Z/\ell^{k+1}\Z)\rightarrow \GL_2(\Z/\ell^{k}\Z)$
be the reduction mod $\ell^k$ map. Then $\ker \rho_k=V_k$.

We use the fact, as explained in the proof of \cite[Lemma 3, IV-23]{serre3}, that if $V_{m} \subset G_E(\ell^{m+1})$, then $V_{k} \subset G_E(\ell^{k+1})$ for all $k\geq m$. It follows that if $G$ is defined modulo $\ell^m$, then we have $\ker (\rho_k|_{G_E(\ell^{k+1})})=V_k$ for all $k\geq m$. So if $\rho_m^{-1}(G_E(\ell^m))=G_E(\ell^{m+1})$, then $\rho_k^{-1}(G_E(\ell^k))=G_E(\ell^{k+1})$ for all $k\geq m$.


This implies that if $G$ is defined modulo $\ell^m$, then $\rho_k^{-1}(G_E(\ell^k))\neq G_E(\ell^{k+1})$ (and hence $\rho_k^{-1}(G_E(\ell^k))$ is of index $\ell^i$ for some $1\leq i \leq 4$ in $G_E(\ell^{k+1})$), for all $1\leq k\leq m-1$.

Suppose now that $G$ is not defined modulo $\ell^{m}$ for any $1 \leq m \leq n+1$. This implies that $$[\GL_2(\Z/ \ell^{k+1}\Z):G_E(\ell^{k+1})]\geq \ell[\GL_2(\Z/ \ell^{k}\Z):G_E(\ell^{k})]$$
for all $1\leq k \leq n+1$. This implies that $v_\ell\left([\GL_2(\Z_\ell):\rho_{E,\ell}(G_\Q)]\right)\geq n+1$, which is a contradiction.





\end{proof}

\begin{lemma}
Let $\ell \geq 3$ be a prime and $E/\Q$ an elliptic curve. Then if $G_E(\ell)=\GL_2(\F_\ell)$ and $P\in E(\overline \Q)$ is a point of order $\ell^2$, then $[\Q(P):\Q]=\ell^2(\ell^2-1)$.
\end{lemma}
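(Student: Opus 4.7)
The plan is to prove the degree formula via the tower $[\Q(P):\Q] = [\Q(P):\Q(\ell P)] \cdot [\Q(\ell P):\Q]$. Since $G_E(\ell) = \GL_2(\F_\ell)$ acts transitively on $E[\ell]\setminus\{0\}$, the Galois orbit of $\ell P$ has size $\ell^2 - 1$, so $[\Q(\ell P):\Q] = \ell^2 - 1$. The remaining factor $[\Q(P):\Q(\ell P)]$ divides $\ell^2$ (since $P$ is one of $\ell^2$ preimages of $\ell P$); by Proposition \ref{prop:divisibility} with $n = 1$ it equals $\ell^2$ as soon as the $\ell$-adic representation of $E$ is defined modulo $\ell$, equivalently $G_E(\ell^2) \supseteq I_2 + \ell M_2(\Z/\ell^2)$.

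The core task is to verify this containment. I would set $K = G_E(\ell^2) \cap (I_2 + \ell M_2(\Z/\ell^2))$; via $I_2 + \ell M \leftrightarrow M$, this is a $\GL_2(\F_\ell)$-submodule of $M_2(\F_\ell)$ under conjugation, and for $\ell \geq 3$ the list of such submodules is $\{0\},\F_\ell I_2,\mathfrak{sl}_2(\F_\ell),M_2(\F_\ell)$. The determinant of $G_E(\ell^2)$ is the cyclotomic character, hence surjective onto $(\Z/\ell^2)^\times$; combined with $\det(I_2+\ell M) \equiv 1+\ell\,\mathrm{tr}(M)\pmod{\ell^2}$, this forces $\mathrm{tr}$ to be surjective on $K$, ruling out $\{0\}$ and $\mathfrak{sl}_2(\F_\ell)$. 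So $K \in \{M_2(\F_\ell),\, \F_\ell I_2\}$.

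The main obstacle is excluding $K = \F_\ell I_2$. In that case $G_E(\ell^2) \cap \SL_2(\Z/\ell^2)$ would have order $|\SL_2(\F_\ell)|$ and embed into $\SL_2(\Z/\ell^2)$ lifting the identity mod $\ell$ (because $K\cap \SL_2 = \{I_2\}$ for odd $\ell$); call this embedding $\sigma$. A short calculation on upper-triangular $\SL_2$-lifts shows that no lift of the unipotent $u = \smallmat{1}{1}{0}{1}$ has order $\ell$ (the $\ell$-th power of $\smallmat{1+\ell a}{1+\ell b}{0}{1-\ell a}$ is always $\smallmat{1}{\ell}{0}{1} \neq I_2$), hence $\sigma(u)$ is not upper-triangular, and neither is any scalar multiple $(1+\ell c)\sigma(u)$; consequently no lift of $u$ in $G_E(\ell^2)$ stabilizes $P$. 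For $\ell \geq 5$ this contradicts the cocycle identity $(d(s)-d(s)^{-1})\,\tilde w(u) = 0$ applied to a diagonal $s$ with $d(s) \neq \pm 1$, and $K = \F_\ell I_2$ is ruled out. For $\ell = 3$ the identity is vacuous and $K = \F_3 I_2$ may survive, but the same nonvanishing $\tilde w(u) \neq 0$ forces $\tilde w$ to be surjective onto $\F_3$, so the stabilizer of $P$ in $G_E(9)$ has order $2$ and the orbit has size $144/2 = 72 = \ell^2(\ell^2-1)$, giving the claim directly.
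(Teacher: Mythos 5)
Your proof takes a genuinely different route from the paper's. The paper handles $\ell \geq 5$ by citing Serre's lemma (\cite[Lemma~3, IV-24]{serre3}) that surjectivity mod $\ell$ implies surjectivity of the full $\ell$-adic representation, and handles $\ell = 3$ by citing Elkies's explicit classification of the unique non-full subgroup $G_E(9)$ and checking transitivity directly. You instead attempt to re-derive a mod-$\ell^2$ version of Serre's lemma from scratch: decompose the kernel $K = G_E(\ell^2) \cap (I_2 + \ell M_2)$ as a $\GL_2(\F_\ell)$-submodule, eliminate $\{0\}$ and $\mathfrak{sl}_2$ via the cyclotomic determinant (correct, and nicely clean), and then rule out $K = \F_\ell I_2$ by showing the hypothetical splitting $\sigma \colon \SL_2(\F_\ell) \to \SL_2(\Z/\ell^2)$ cannot exist. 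This is a self-contained alternative and, especially for $\ell=3$, avoids invoking a nontrivial external classification — a real potential advantage.

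However, the step ruling out $K = \F_\ell I_2$ for $\ell \geq 5$ has a genuine gap. Your computation of $\sigma(u)^\ell$ is carried out only for \emph{upper-triangular} lifts; that shows $\sigma(u)$ cannot be upper-triangular, but not that no lift of order $\ell$ exists. You then appeal to an undefined ``cocycle identity $(d(s)-d(s)^{-1})\tilde w(u)=0$'' to finish. This identity is never derived, the object $\tilde w$ is never defined, and as stated it does not appear to be correct: working out the first-order consequence of $\sigma(s)\sigma(u)\sigma(s)^{-1}=\sigma(u)^{d^2}$ on the $(2,1)$-entry gives $(d^4-1)(M_u)_{21}=0$, which is vacuous for $\ell=5$ (where $d^4=1$ for every $d\in\F_5^\times$), so the conjugation relation alone cannot rule out $K=\F_5 I_2$. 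The correct and simplest repair is to compute $\sigma(u)^\ell$ for an \emph{arbitrary} lift $\sigma(u)=u+\ell M$: one finds $\sigma(u)^\ell \equiv u^\ell + \ell\sum_{k=0}^{\ell-1} u^k M u^{\ell-1-k} \pmod{\ell^2}$, and the $(1,2)$-entry of the sum involves $\sum_{k=0}^{\ell-1} k(k+1)$, which is $\equiv 0 \pmod \ell$ for $\ell\ge 5$ (as $\ell\nmid 6$) but $\equiv 2 \pmod 3$ for $\ell=3$. Hence for $\ell\ge 5$ one gets $\sigma(u)^\ell = \smallmat{1}{\ell}{0}{1}\ne I_2$ for \emph{every} lift, an outright contradiction, while for $\ell=3$ one only gets the constraint $(M_u)_{21}\equiv 2$, exactly as you need. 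Your $\ell=3$ orbit–stabilizer argument then does go through (the stabilizer of $P$ has no element of order $3$ because every lift of $u$ in $G_E(9)$ has nonzero $(2,1)$-entry modulo $9$, and the orbit being bounded by $72$ forces $|\mathrm{Stab}(P)|=2$), but the phrase ``forces $\tilde w$ to be surjective onto $\F_3$'' does not carry the argument; you should say this directly in terms of the $(2,1)$-entry and orbit–stabilizer as just described. In short: the architecture is sound and different from the paper's, but the $\ell\ge 5$ case currently rests on an unsupported and apparently incorrect identity, and needs the general-lift computation of $\sigma(u)^\ell$ spelled out.
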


\begin{proof}
If $\ell \geq 5$, then it follows from \cite[Lemma 3, IV-24]{serre3} that if $G_E(\ell)=\GL_2(\F_\ell)$, then $\rho_{E,\ell}$ is surjective. It follows that the $\ell$-adic representation is defined modulo $\ell$, so the lemma follows from Proposition \ref{prop:divisibility}. For $\ell=3$, if $G_E(9)=\GL_2(\Z/9\Z)$, then the conclusion is the same as before, while if $G_E(9)\neq \GL_2(\Z/9\Z)$ then it follows from \cite{elkies} that $G_E(9)=G$, where $G$ is a (unique up to conjugacy) subgroup $G$ of $\GL_2(\Z/9\Z)$ generated by
$\smallmat{4}{5}{4}{4}$ and $\smallmat{4}{5}{8}{6}$.
One easily checks that this group acts transitively on the 72 points of order 9 in $E(\overline \Q)$, so the  $[\Q(P):\Q]=72$ for all points of order 9 (using the same argumentation as in \cite[Section 5]{GN?}).

\end{proof}

\begin{lemma}
\label{definition_of_pn}
Let $\ell$ be a prime, $E$ an elliptic curve defined over a number field $K$, $P\in E(\Kbar)$ a point of order $\ell^k$, $k\geq 2$ and suppose   $\ell^{j}P\in E(K)$ for some $j<k$ and such that $\ell^{k-j}>2$. Then $K(x(P))=K(P)$.
 \end{lemma}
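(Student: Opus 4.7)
The plan is the standard dichotomy: $K(x(P))\subseteq K(P)$ always, and because $y(P)$ satisfies a quadratic equation over $K(x(P))$ (coming from the Weierstrass equation of $E$), the index $[K(P):K(x(P))]$ is either $1$ or $2$. So the whole task reduces to ruling out the possibility that this index equals $2$.

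Suppose for contradiction that $[K(P):K(x(P))]=2$. Then there exists $\sigma\in \Gal(\Kbar/K(x(P)))$ with $\sigma(P)\neq P$. Since $\sigma$ fixes $x(P)$, the point $\sigma(P)$ must be one of the two points on $E$ with $x$-coordinate equal to $x(P)$; as it is not $P$ itself, the only remaining option is $\sigma(P)=-P$.

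Now I would exploit the hypothesis $\ell^{j}P\in E(K)$. Since $\sigma$ in particular fixes $K$, it fixes $\ell^{j}P$, so
\[
\ell^{j}P \;=\; \sigma(\ell^{j}P) \;=\; \ell^{j}\sigma(P) \;=\; -\ell^{j}P,
\]
which gives $2\ell^{j}P=O$. But $P$ has order $\ell^{k}$, so $\ell^{j}P$ has order exactly $\ell^{k-j}$, and the relation $2\ell^{j}P=O$ forces $\ell^{k-j}\mid 2$, i.e.\ $\ell^{k-j}\leq 2$. This directly contradicts the hypothesis $\ell^{k-j}>2$, completing the proof.

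There is essentially no obstacle here: the only subtlety is the ``$\ell^{k-j}>2$'' hypothesis, whose role becomes transparent in the last step — it is exactly the clause that forbids the degenerate case $\ell=2$, $k-j=1$ (where $\ell^{j}P$ is a $2$-torsion point and equals its own inverse, so the above argument produces no contradiction), and it is automatic as soon as $\ell$ is odd since then $\ell^{k-j}\geq \ell\geq 3$.
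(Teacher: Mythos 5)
Your proof is correct and is essentially identical to the paper's: both assume $[K(P):K(x(P))]=2$, pick a nontrivial $\sigma$ fixing $x(P)$, deduce $\sigma(P)=-P$, apply $\sigma$ to $\ell^j P\in E(K)$ to get $\ell^j P=-\ell^j P$, and derive a contradiction with the hypothesis $\ell^{k-j}>2$. The paper takes $\sigma\in\Gal(K(P)/K(x(P)))$ while you take $\sigma\in\Gal(\Kbar/K(x(P)))$, but this is an immaterial variation.
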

\begin{proof}
Obviously $[K(P):K(x(P))]=1$ or $2$. Suppose $[K(P):K(x(P))]=2$ and let $1\neq \sigma \in \Gal(K(P)/K(x(P)))$. Then we have $\sigma(x(P))=x(P)$, so $\sigma(y(P))=y(-P)$ and hence $\sigma(P)=-P$, as $\sigma\neq 1$. But we have
$$-\ell^{j}P=\ell^{j}(\sigma(P))=\sigma (\ell^{j}P)=\ell^{j}P,$$
where the last equation follows from the fact that $\ell^{j}P\in E(K)$. Since by assumption $\ell^{j}P$ is a point of order $>2$, this is a contradiction.
\end{proof}

The most time-consuming part of our algorithm is determining the existence of points of order $\ell^k$ for $k\geq 2$, and the fields over which such points live if they exist.

We now prove results that will prove the non-existence of points of certain orders $\ell ^k$ over number fields of relatively small degree $d$.

\subsection{Points of order $125$}

\begin{proposition}
\label{lem:5-power}
Let $E/\Q$ be an elliptic curve and $K$ a number field of degree $<50$. Then $E(K)$ does not have a point of order $125$.
\end{proposition}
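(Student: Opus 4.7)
The plan is to combine Proposition \ref{prop:divisibility} with Mazur's classification of $\Phi(1)$, reducing the statement to a question about the $5$-adic Galois image of $E/\Q$. Let $P \in E(\overline\Q)$ be a point of order $125$ and consider the tower $\Q \subseteq \Q(5P) \subseteq \Q(P)$.

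The key step is the following: if the $5$-adic representation $\rho_{E,5}$ is defined modulo $5^2 = 25$, then Proposition \ref{prop:divisibility} applied with $\ell = 5$ and $n = 2$ to $P$ (of order $5^3$) yields $[\Q(P):\Q(5P)] = 25$. The point $5P$ has order $25$, and Mazur's theorem gives $\Z/25\Z \notin \Phi(1)$, so $5P$ is not $\Q$-rational and $[\Q(5P):\Q] \geq 2$. Multiplying, $[\Q(P):\Q] \geq 50$, which is exactly what we need. (In fact one even has $[\Q(5P):\Q] \geq 3$ using $\Z/25\Z \notin \Phi_\Q(2)$, but the weaker bound is already enough.)

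It therefore suffices to show that $\rho_{E,5}$ is defined modulo $25$ for every $E/\Q$. By Lemma \ref{lem:index}, this is implied by $v_5([\GL_2(\Z_5):\rho_{E,5}(G_\Q)]) \leq 1$ in the non-CM case. I would establish this by invoking the classification of $5$-adic Galois images of non-CM elliptic curves over $\Q$ (due to Rouse, Sutherland and Zureick-Brown, building on Sutherland-Zywina). The heart of the argument is that $v_5$ of the index exceeding $1$ would force the mod-$125$ image to be strictly smaller than the pullback of the mod-$25$ image; inspecting the classification shows this does not occur for non-CM $E/\Q$, the most suggestive obstruction being that a rational cyclic $125$-isogeny is impossible since $X_0(125)(\Q)$ has no non-cuspidal rational points. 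The finitely many $\Q$-rational CM $j$-invariants are handled separately by an explicit computation: factor $\psi_{125}/\psi_{25}$ for a representative curve of each CM $j$-invariant and verify that every irreducible factor has degree large enough that a point of order $125$ is defined over a field of degree $\geq 50$.

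The main obstacle will be this last step of going through the classification of $5$-adic images carefully enough to guarantee $v_5(\mathrm{index}) \leq 1$ uniformly in non-CM $E/\Q$, since it requires citing (or rederiving) substantial machinery. Once that input is granted, the rest of the proof is a short combination of Proposition \ref{prop:divisibility}, Lemma \ref{lem:index}, and Mazur's theorem.
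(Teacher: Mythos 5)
Your outline --- show the $5$-adic representation is defined modulo $25$, apply Proposition \ref{prop:divisibility} to get $[\Q(P):\Q(5P)]=25$, and use Mazur to get $[\Q(5P):\Q]\geq 2$, hence $[\Q(P):\Q]\geq 50$ --- is a correct reduction, and in the subcase where $E$ has a rational $5$-isogeny it is exactly the paper's proof: there Greenberg's theorem \cite[Theorem 2]{Greenberg} gives $v_5(\mathrm{index})\leq 1$, and Lemma \ref{lem:index} then yields ``defined mod $25$.''

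The gap is the isogeny-free case. You want $v_5\bigl([\GL_2(\Z_5):\rho_{E,5}(G_\Q)]\bigr)\leq 1$ uniformly for all non-CM $E/\Q$, invoked via ``the classification of $5$-adic images.'' That classification is far heavier machinery than the paper uses, was not publicly available at the time of writing, and your sketched justification is not a proof: the triviality of $X_0(125)(\Q)$ rules out a rational cyclic $125$-isogeny, but by itself does not bound the $5$-part of the $5$-adic index --- the index can be large for Cartan-type or scalar reasons, not only via a Galois-stable cyclic $125$-subgroup. The paper takes a genuinely different route here: for curves without a rational $5$-isogeny it applies Lozano-Robledo's ramification bound \cite[Theorem 2.1]{loz2} (with $p=5$, $a=1$, $n=3$) to conclude $25\mid[\Q(P):\Q]$ directly, never passing through the $5$-adic image at all, and then reads off from \cite[Table 1]{GN?} that the degree of the field generated by a point of order $5$ on a curve without a $5$-isogeny is even, giving $50\mid[\Q(P):\Q]$. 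That argument is self-contained and uses only published inputs; moreover, since CM curves over $\Q$ have no rational $5$-isogeny, it absorbs the CM case too, avoiding the separate $\psi_{125}/\psi_{25}$ factorization you propose. If you insist on the ``defined mod $25$'' route in the isogeny-free case, you must cite the precise level-$25$ statement of Rouse--Sutherland--Zureick-Brown rather than gesture at it; with that input granted, your argument does go through, but it is strictly heavier than what is needed.
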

\begin{proof}
Let $P$ be a point of order $125$. First consider the case when $E$ has a $5$-isogeny over $\Q$. Let $d$ be the power of 5 in $[\Aut_{\Z_5}T_5(E):\im \rho_{E,5}]$ (note that this index is finite as elliptic curves with CM do not have $5$-isogenies over $\Q$). By \cite[Theorem 2]{Greenberg}, $d$ is at most $5$, and we conclude by Lemma \ref{lem:index} that the $\ell$-adic representation of $E$ is defined modulo $25$. From here it follows by Proposition \ref{prop:divisibility} that $[\Q(P):\Q(5 P)]=25$. Since there exist no points of order $25$ on elliptic curves over quadratic fields \cite{kam,km}, we have $[\Q(P):\Q]>50$.

Suppose now that there is no isogeny of degree 5 over $\Q$. Applying \cite[Theorem 2.1]{loz2} (with $L=\Q$, $p=5$, $a=1$ and $n=3$), we obtain that $[\Q(P):\Q]$ is divisible by $25$. From \cite[Table 1]{GN?} we see that the field over which an elliptic curve without an isogeny gains a $5$-torsion point is divisible by 2. So we conclude that $[\Q(P):\Q]$ is divisible by 50 and hence $[\Q(P):\Q]\geq 50$.
\end{proof}

\label{sec:if-then}

\subsection{Points of order $49$}


\begin{lemma}
\label{lem:49}
There are no points of order $49$ on an elliptic curve $E/\Q$ over any number field of degree $d<42$.
\end{lemma}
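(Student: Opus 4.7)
The plan is to adapt the proof of Proposition~\ref{lem:5-power} to the prime $\ell = 7$ and points of order $49 = 7^2$, splitting according to whether $E/\Q$ admits a $7$-isogeny.

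If $E$ has no $7$-isogeny over $\Q$, then the mod-$7$ image is not contained in a Borel subgroup of $\GL_2(\F_7)$. I would apply \cite[Theorem 2.1]{loz2} with $L = \Q$, $p = 7$, $a = 1$ and $n = 2$ to force $7 \mid [\Q(P):\Q]$ for any point $P$ of order $49$. Combining this with the fact that, in the absence of a $7$-isogeny, a point of order $7$ on $E$ cannot be defined over $\Q$ or over a quadratic field (by Mazur, Kamienny--Kenku--Momose), and more precisely that the field of definition of such a point has degree divisible by a substantial factor determined by the possible non-Borel mod-$7$ images (read off from \cite[Table~1]{GN?}), one should obtain $[\Q(P):\Q] \geq 42$.

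If $E$ admits a $7$-isogeny over $\Q$, then $E$ is non-CM (no CM elliptic curve over $\Q$ admits a $7$-isogeny), so by \cite[Theorem 2]{Greenberg} the $7$-part of $[\GL_2(\Z_7):\im \rho_{E,7}]$ is bounded. Lemma~\ref{lem:index} then yields that the $7$-adic representation is defined modulo $7^m$ for some small $m$, and Proposition~\ref{prop:divisibility} gives $[\Q(P):\Q(7P)] = 49$ for points of order $7^{m+1}$. For $P$ of order exactly $49$, one combines this (or a variant for order $7^2$ with a refined analysis of the mod-$49$ image) with the degree of definition of the point $7P$, which lies in the $\Q$-rational $7$-isogeny kernel and therefore is defined over a number field whose degree is the order of the Galois character $\chi : G_\Q \to (\Z/7\Z)^\times$ acting on the kernel, a divisor of $6$ and at least $2$ by Mazur.

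The main obstacle will be the sub-case of Case~2 where $E$ possesses a $\Q$-rational $49$-isogeny: such curves correspond to the finitely many non-cuspidal rational points on $X_0(49)$ (of genus $1$), and a point of order $49$ in the isogeny kernel is a priori defined over a number field whose degree is only the order of the isogeny character $G_\Q \to (\Z/49\Z)^\times$, a divisor of $42$. Eliminating degrees strictly less than $42$ here would require either a case-by-case treatment of the finitely many $j$-invariants involved (where one may fall back on a direct factorisation of the $49$-division polynomial), or invoking the classification of $\Z/49\Z$ as a possible torsion subgroup of $E(K)_{\tors}$ for $E/\Q$ base-changed to a number field $K$ of degree $< 42$.
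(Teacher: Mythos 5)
Your Case 1 (no $7$-isogeny) is essentially the paper's argument: apply \cite[Theorem 2.1]{loz2} to get $7\mid[\Q(P):\Q(7P)]$, then if the total degree were $<42$ the degree of $\Q(7P)$ would have to be $<6$, which \cite[Table 1]{GN?} rules out when $G_E(7)$ is not Borel. That part is fine.

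Your Case 2, however, has a genuine gap, and in fact starts from a false premise. The claim that no CM elliptic curve over $\Q$ admits a $7$-isogeny is incorrect: the prime $7$ ramifies in $\Q(\sqrt{-7})$, which has class number one, so the CM curve with $j=-3375=-15^3$ does carry a rational $7$-isogeny. Thus \cite[Theorem 2]{Greenberg} (which concerns non-CM curves) does not cover all curves in Case 2, and the Greenberg--Lemma~\ref{lem:index}--Proposition~\ref{prop:divisibility} pipeline you propose simply does not apply there. Even in the non-CM subcase the pipeline is weaker than you need: Lemma~\ref{lem:index} only gives that the $7$-adic image is defined modulo $7^m$ for some $m\leq n+1$, and Proposition~\ref{prop:divisibility} then yields $[\Q(P):\Q(7P)]=49$ only for points of order $7^{m+1}$, which for $m\geq 2$ says nothing about points of order exactly $49$. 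You flag this yourself (``a variant for order $7^2$ with a refined analysis of the mod-$49$ image'') and you also flag the $\Q$-rational $49$-isogeny subcase, but these are precisely the difficulties that the proposal does not resolve. The paper sidesteps all of this by invoking \cite{GRSS}, which is a dedicated classification for curves over $\Q$ with a $7$-isogeny: the $7$-adic image is maximal (hence defined modulo $7$, and Proposition~\ref{prop:divisibility} applies at once to give $[\Q(P):\Q(7P)]=49\geq 42$) unless $j_E=-15^3$ or $255^3$, and for those two $j$-invariants an explicit factorisation of the primitive $49$-division polynomial shows $[\Q(P):\Q]\geq 147$. In short: your fallback ``case-by-case on $j$-invariants or classification of $\Z/49\Z$ torsion'' is what is actually needed, and the GRSS result is the tool that carries it out; without it your Case 2 is incomplete.
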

\begin{proof}
Let us split the proof in two cases depending if $E$ has a 7-isogeny or not. If $E$ has a 7-isogeny, then by the results of \cite{GRSS} the $7$-adic representation is either as large as possible or the curve has $j_E=-15^3$ or $255^3$. If the representation is as large as possible, then by Proposition \ref{prop:divisibility} we have $[\Q(P):\Q(7P)]=49$, eliminating this case. If $j_E=-15^3$ or $255^3$, we explicitly \href{http://matematicas.uam.es/~enrique.gonzalez.jimenez/research/tables/algorithm/Lemma_2_8_order49.txt}{\color{blue}check} that $[\Q(P):\Q]\geq 147$.

Finally, suppose that $E$ does not have a $7$-isogeny and let $P$ be a point of order $49$ of $E$. 
By \cite[Theorem 2.1]{loz2}, we get that $[\Q(P):\Q(7P)]$ is divisible by $7$. So if $[\Q(P):\Q]=[\Q(P):\Q(7P)][\Q(7P):\Q]<42$, then it would follow that $[\Q(7P):\Q]<6$. By looking at \cite[Table 1]{GN?} we see that this is only possible when $G_E(7)$ is a Borel subgroup, which is a contradiction, since then $E$ would have a $7$-isogeny over $\Q$.

\end{proof}

\subsection{Points of order $\ell^2$ for $\ell > 7$}

\begin{lemma}
\label{lem:p-power}
There are no points of order $\ell^2$ for $\ell\geq 11$ on an elliptic curve $E/\Q$ over any number field of degree $d< 55$.
\end{lemma}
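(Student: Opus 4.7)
The plan is to mirror the two-case structure of the proof of Lemma \ref{lem:49}, splitting according to whether $E$ has an $\ell$-isogeny over $\Q$. Let $P$ be a point of order $\ell^2$ and set $R=\ell P$, a point of order $\ell$, so that $[\Q(P):\Q]=[\Q(P):\Q(R)]\cdot[\Q(R):\Q]$; it suffices to bound each factor below.

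First I would dispose of the case where $E$ has no $\ell$-isogeny over $\Q$. Exactly as at the end of the proof of Lemma \ref{lem:49}, applying \cite[Theorem 2.1]{loz2} gives that $[\Q(P):\Q(R)]$ is divisible by $\ell$. On the other hand, the classifications of $\Phi_\Q(d)$ for $d\leq 4$ obtained in \cite{naj,GJT14,GJT15,GJNT15,chou,GJ17} show that every group appearing there has order divisible only by primes $\leq 7$; in particular, no elliptic curve $E/\Q$ acquires a point of order $\ell\geq 11$ over a number field of degree at most $4$. Consequently $[\Q(R):\Q]\geq 5$, and so $[\Q(P):\Q]\geq 5\ell\geq 55$, a contradiction.

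The more delicate case is when $E$ has an $\ell$-isogeny over $\Q$. By Mazur's isogeny theorem only $\ell\in\{11,13,17,19,37,43,67,163\}$ can occur, and for each such $\ell$ there are only finitely many $j$-invariants of $E/\Q$ admitting an $\ell$-isogeny. Since for $\ell\geq 11$ the modular curve $X_0(\ell^2)$ has no non-cuspidal non-CM rational points (Mazur--Kenku), none of these curves admits an $\ell^2$-isogeny over $\Q$, and so \cite[Theorem 2]{Greenberg} shows that the $\ell$-adic image is as large as is compatible with the $\ell$-isogeny, that is, defined modulo $\ell$. Proposition \ref{prop:divisibility} applied with $n=1$ then yields $[\Q(P):\Q(R)]=\ell^2\geq 121$, forcing $[\Q(P):\Q]\geq 121>55$.

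The main technical point that needs care is the invocation of Greenberg's theorem in the isogeny case: I need to verify that its bound on the $\ell$-part of $[\GL_2(\Z_\ell):\rho_{E,\ell}(G_\Q)]$ really is tight enough to yield ``defined modulo $\ell$'' (and not merely modulo $\ell^2$, which would apply Proposition \ref{prop:divisibility} only to points of order $\ell^3$). Should this fail for some particular $j$-invariant, the fallback is an explicit computation: for each of the finitely many residual curves one factors the $\ell^2$-division polynomial and checks that every irreducible factor has degree $\geq 55$, analogous to the $j=-15^3,255^3$ subcase of Lemma \ref{lem:49}.
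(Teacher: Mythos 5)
Your proposal misses the CM case entirely, and this is a genuine gap, not a cosmetic one. The paper's proof opens with "We divide the proof into two cases: when $E$ has CM and when it doesn't," and roughly half the argument is devoted to CM curves, using \cite[Theorem 6.2]{BC}, \cite[Theorem 4.8 c)]{bcs}, and (for $\ell=13$, $j(E)=0$) a quadratic-twist argument. The issue is unavoidable in your framework: for $\ell\in\{11,19,43,67,163\}$ there exist CM elliptic curves over $\Q$ with rational $\ell$-isogenies (from imaginary quadratic orders of class number one), and for such curves $\rho_{E,\ell}(G_\Q)$ has \emph{infinite} index in $\GL_2(\Z_\ell)$, so Greenberg's theorem and Lemma \ref{lem:index} say nothing and the conclusion "defined modulo $\ell$" is simply false. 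The paper signals this precisely: in the proof of Proposition \ref{lem:5-power} it remarks that the index is finite because "elliptic curves with CM do not have $5$-isogenies over $\Q$" — a statement that fails for several $\ell\geq 11$. Your phrase "no non-cuspidal non-CM rational points on $X_0(\ell^2)$" quietly excludes CM and then the next sentence applies Greenberg to all curves with an $\ell$-isogeny, so the logic does not close. The proposed fallback of factoring $\ell^2$-division polynomials is also not a real escape hatch here: for $\ell=163$ the relevant polynomial has degree on the order of $\ell^4$, and for CM $j$-invariants one would still need to argue about all quadratic twists.

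A secondary remark: in the no-isogeny case you take a genuinely different route from the paper. The paper reads off $[\Q(\ell P):\Q]\geq 55$ for $\ell=11$ directly from \cite[Table 1]{GN?} (and $\geq 72$ for $\ell=13$, $\geq(\ell^2-1)/3$ for $\ell>13$), with no need for \cite[Theorem 2.1]{loz2}. Your combination of loz2 with the weak bound $[\Q(R):\Q]\geq 5$ also works, but only barely: $5\cdot 11=55$ meets the threshold with zero slack, whereas the paper's bound on $[\Q(\ell P):\Q]$ alone already settles $\ell=11$. You should also verify that loz2's Theorem 2.1 applies without a non-CM hypothesis, or else the CM case must be peeled off here too. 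In short, the non-CM skeleton of your argument is sound and parallels the paper's, but the missing CM branch is the core of why this lemma is delicate and must be supplied — most naturally via the Bourdon--Clark results on torsion of CM elliptic curves, as the paper does.
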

\begin{proof}
We divide the proof into two cases: when $E$ has CM and when it doesn't.

Suppose first that $E$ doesn't have CM. Let $P$ be a point of order $\ell^2$. If $E$ has a $\ell$-isogeny over $\Q$ and does not have CM, by the results of \cite{Greenberg}, it follows that the $\ell$-adic image is defined mod $\ell$, from which it follows by Proposition \ref{prop:divisibility} that $[\Q(P):\Q(\ell P)]$ is divisible by $\ell^2$. On the other hand, if there are no $\ell$-isogenies over $\Q$, then we have that $[\Q(\ell P):\Q]\geq 55$ for $\ell=11$ by \cite[Table 1]{GN?}, $[\Q(\ell P):\Q]\geq 72$ for $\ell=13$ by \cite[Table 2]{GN?} and $[\Q(\ell P):\Q]\geq (\ell^2-1)/3$ for $\ell> 13$ by \cite[Theorems 3.2 and 5.6]{GN?}.

Suppose now that $E$ has CM by an order $\mathcal{O}$. Let $F=\Q(P)$, where $P$ is of order $\ell^k$, $k\geq 2$, and let $K=\mathcal O \otimes_\Z \Q$ be the CM field of $E$. If $j(E) \neq 0,1728$ or $\ell \neq 13$ it follows from \cite[Theorem 6.2]{BC} that $FK$ is of degree $\geq 110$ and hence $[F:\Q]\geq 55$.

Suppose from now on that $\ell=13$. If $K\nsubseteq F$ and $j(E)=0$ or $1728$, we have $(\ell,\Delta(\mathcal{O}))=1$ so we can apply \cite[Therorem 4.8 c)]{bcs} to show that $\Q(\zeta_{\ell^k})^+$ is strictly contained in $F$, from which it follows that $[F:\Q]\geq 312$.

Suppose $K\subset F$. If $j(E)=1728$, then \cite[Theorem 6.2]{BC} gives us that $[F:\Q]\geq 78$. Finally, suppose $j(E)=0$. From \cite[Theorem 6.2]{BC} it follows that for any elliptic curve with $j(E)=0$ with a point of order $169$ over a number field $F$ containing $K$, we have that $52\mid [F:\Q]$. Suppose $E/F$ is such an elliptic curve; i.e. $j(E)=0$, $E(F)_{\tors}$ has a point of order 169 and $[F:K]=26$. We claim that $E$ cannot be a base change of an elliptic curve defined over $\Q$.
Since $F$ is a subfield of $\Q(E[13])$, by the theory of complex multiplication it is Abelian over $K$ and so $\Gal(F/K)\simeq \Z/26\Z$. It follows by Galois theory that there exists a field $K\subset F'\subset F$ where $[F:F']=2$. We can write $F=F'(\sqrt \delta)$ for some $\delta \in F'$. Let $E^\delta$ be the quadratic twist of $E$ by $\delta$. If $E$ was defined over $\Q$, then we would have (see for example \cite[Lemma 1.1]{lol})
$$E(F)[169]\simeq E(F')[169] \oplus E^\delta(F')[169],$$
which now implies that there exists an elliptic curve with $j(E)=0$ and a point of order $169$ over $F'$, contradicting \cite[Theorem 6.2]{BC}.

\end{proof}

\subsection{Points of order $37$}
The following lemma allows us to deal with points of order $37$ over number fields of degree $12$, which is the smallest degree over which an elliptic curve defined over $\Q$ can have a point of order $37$.
\begin{lemma} \label{lem:37}
Let $E/\Q$ be an elliptic curve. Then $E$ has a point of order $37$ over a degree $12$ number field $K$ if and only if $j_E=-7\cdot 11^3$. Moreover, $K$ has to be $K=\Q(\alpha,\sqrt{\frac 1 d \cdot f(\alpha)})$ where $f(x)=x^3 - 1155x + 16450$, $d\in \Q$ is such that $E$ is $\Q$-isomorphic to the elliptic curve $dy^2=f(x)$ and $\alpha$ is a root of the irreducible polynomial
$$
g(x)=x^6 - 210x^5 - 8085x^4 + 125300x^3 + 4251975x^2 - 16133250x - 408849875.
$$
In particular, $E(K)_{\tors}=\Z/37\Z$.
\end{lemma}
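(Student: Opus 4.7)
The plan is to first pin down $j_E$ using mod-$37$ Galois image considerations, and then construct $K$ explicitly from a factorization of the $37$-division polynomial.

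Suppose $E/\Q$ has a point $P$ of order $37$ with $[\Q(P):\Q]=12$. If $E$ has complex multiplication, then Mazur's isogeny theorem rules out any $\Q$-rational $37$-isogeny, so $G_E(37)$ lies in the normalizer of a Cartan subgroup; a standard orbit calculation then shows every orbit on $E[37]\setminus\{0\}$ has size at least $2\cdot 36 = 72$, a contradiction. If $E$ is non-CM, Conjecture \ref{SUC} (known unconditionally for $\ell=37$) gives that either $G_E(37)=\GL_2(\F_{37})$, producing a single orbit of size $37^2-1=1368$ on $E[37]\setminus\{0\}$, or $j_E$ lies in the exceptional set $\{-7\cdot 11^3,\ -7\cdot 137^3 \cdot 2083^3\}$.

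In each exceptional case $E$ admits a $\Q$-rational cyclic subgroup $C\subset E[37]$ of order $37$, and $G_E(37)$ sits inside the Borel stabilizing $C$. Let $\chi:G_\Q\to\F_{37}^\times$ denote the character giving the Galois action on $C$. Any point of $E[37]\setminus C$ has orbit size a multiple of $37$ (since the image contains a nontrivial unipotent element; otherwise $E$ would have two independent $\Q$-rational $37$-isogenies, which is not the case for either exceptional curve). Hence $P\in C$, so the orbit size equals $|\chi|$, and we need $|\chi|=12$. A direct computation with a concrete model of each exceptional $j$-invariant (e.g.\ by factoring $\psi_{37}$ over $\Q$) shows $|\chi|=12$ exactly when $j_E = -7\cdot 11^3$; for $j_E = -7\cdot 137^3\cdot 2083^3$, using $\chi\cdot\chi' = \omega$ (the mod-$37$ cyclotomic character of order $36$) forces a different order for the corresponding character, ruling it out.

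For the explicit form of $K$, fix the model $E_0: y^2 = f(x)$ with $f(x) = x^3 - 1155 x + 16450$, which has $j = -7\cdot 11^3$. Any $E/\Q$ with this $j$-invariant is $\Q$-isomorphic to a quadratic twist $E_d: dy^2 = f(x)$ for some $d\in\Q^\times$. Factoring $\psi_{37}$ of $E_0$ over $\Q$ yields a unique irreducible factor of degree $6$, which is verified to be $g(x)$; its roots are precisely the $x$-coordinates of the nonzero points of $C$. For a root $\alpha$ of $g$, the $y$-coordinate of the corresponding $37$-torsion point on $E_d$ is $\pm\sqrt{f(\alpha)/d}$, so $\Q(P) = \Q(\alpha,\sqrt{f(\alpha)/d}) = K$. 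That $[K:\Q]=12$ (rather than $6$) is automatic from $|\chi|=12$, since if $f(\alpha)/d$ were a square in $\Q(\alpha)$ the orbit size of $P$ would be at most $6$. The ``moreover'' assertion $E(K)_{\tors} = \Z/37\Z$ then follows by running the main algorithm of the paper on a representative $E_d$, together with the observation that $K$ is a non-cyclotomic degree-$12$ extension leaving no room for additional torsion compatible with Weil pairing constraints. The main obstacle is the explicit order-of-character computation in Step 2, which distinguishes the two exceptional $j$-invariants and is ultimately a finite computer-algebra check on $\psi_{37}$ for each of them.
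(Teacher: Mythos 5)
Your overall strategy (pin down $j_E$, then construct $K$ from a degree-$6$ factor of $\psi_{37}$ plus the $y$-coordinate, then prove the torsion is exactly $\Z/37\Z$) matches the paper's, but the route to the first step and especially the third step diverge, and both have issues.

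For the first step, the paper simply cites the classification in \cite[Table 2]{GN?} to reduce directly to $G_E(37)=\texttt{37.B.8.1}$ and then \cite[Theorem 1.10(ii)]{zyw} to get $j_E=-7\cdot 11^3$, whereas you rederive the reduction via Conjecture~\ref{SUC} and an orbit/Borel analysis. That rederivation is broadly sound for the non-CM case, and your heuristic for distinguishing $-7\cdot 11^3$ from $-7\cdot 137^3\cdot 2083^3$ (order of the diagonal character on the isogeny kernel) is correct in spirit. However, your dispatch of the CM case is wrong as stated: Mazur's isogeny theorem does \emph{not} rule out $\Q$-rational $37$-isogenies --- $37$ is one of the allowed primes $\{2,3,5,7,11,13,17,19,37,43,67,163\}$ and $X_0(37)(\Q)$ has non-cuspidal points. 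What is true is that the two non-cuspidal $j$-invariants on $X_0(37)$ are both non-CM, so no CM curve has a $\Q$-rational $37$-isogeny, but that is a fact about the CM case of the classification, not an application of Mazur's theorem; you would need to cite the CM classification (or \cite[Theorems 3.6 and 5.6]{GN?}, as the paper's Algorithm does) rather than Mazur.

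The more serious gap is the ``moreover'' clause $E(K)_{\tors}=\Z/37\Z$. Your argument --- ``running the main algorithm of the paper on a representative $E_d$'' and ``$K$ is a non-cyclotomic degree-$12$ extension leaving no room for additional torsion'' --- is not a proof: the algorithm uses this very lemma as one of its if--then shortcuts, and ``no room for additional torsion'' is an assertion, not an argument. The paper's actual argument is concrete: full $37$-torsion is ruled out by the Weil pairing; order $37^2$ is ruled out by Lemma~\ref{lem:p-power}; for any prime $\ell\neq 37$, the mod-$\ell$ representation of $E$ is surjective (since $37$ is the only non-surjective prime for any curve with $j_E=-7\cdot 11^3$, using \cite[Lemma 5.27]{Sutherland2}), so a point of order $\ell$ over a subfield of $K$ forces $\ell^2-1\mid 12$, leaving only $\ell=2$; and $\ell=2$ is excluded because $K$ is cyclic (being generated by a point in the kernel of a rational isogeny, \cite[Lemma 4.8]{DLNS}) while $\Q(P)$ for $P$ of order $2$ is non-Galois over $\Q$. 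You would need something equivalent to this chain; the Weil-pairing remark alone does not close the case $\ell\in\{2,3\}$.
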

\begin{proof}
From \cite[Table 2]{GN?} it follows that $E$ has a point of order 37 over a degree 12 field if and only if $G_E(37)=\texttt{37.B.8.1}$, which happens if and only if $j_E=-7\cdot 11^3$ (see \cite[Theorem 1.10. (ii)]{zyw}). We note that the elliptic curve $E':y^2=f(x)$ has $j_{E'}=-7\cdot 11^3$ and therefore there exists a number field $L$ of degree $12$ such that $E'$ has a point of order $37$ over $L$ (see \cite[Section 6]{naj}). We  \href{http://matematicas.uam.es/~enrique.gonzalez.jimenez/research/tables/algorithm/Lemma_2.10_11.txt}{\color{blue}have} that $g(x)$ is an irreducible factor of the $37$-division polynomial of $E'$. In particular $\alpha=x(P)$ where $P$ is a point of order $37$ in $E'$ and $L=\Q(P)=\Q(\alpha,\sqrt{f(\alpha)})$. Now if $E/\Q$ is an elliptic curve with $j_E=-7\cdot 11^3$, it will be a quadratic twist of $E'$; thus $E$ will have a model $E\,:\,dy^2=f(x)$ for some $d\in\Q$. In particular, $R=(\alpha,\sqrt{\frac 1 d\cdot f(\alpha)})$ is a point of order $37$ on $E$. Then we obtain $K=\Q(R)$ and get the desired result.

Let us prove $E(K)_{\tors}=\Z/37\Z$. The curve $E$ cannot have full $37$-torsion over $K$ by the Weil pairing and cannot have a point of order $37^2$ by Lemma \ref{lem:p-power}. The set of non-surjective primes only depends on the $j$-invariant of $E$ (\cite[Lemma 5.27]{Sutherland2}). Therefore it is enough to compute this set for a single elliptic curve with that $j_E=-7\cdot 11^3$. We have that the elliptic curve $E'$ of minimal conductor with $j_{E'}=-7\cdot 11^3$ has LMFDB label \href{http://www.lmfdb.org/EllipticCurve/Q/1225h1}{\texttt{1225.b2}}.

We see in the LMFDB\footnote{Note that the data for non-CM elliptic curves over $\Q$ in the LMFDB provably includes
all p for which the mod-p representation is non-surjective (this has
been verified using Zywina's algorithm \cite{zyw}, see \url{https://www.lmfdb.org/EllipticCurve/Q/Reliability}).} (or alternatively explicitly compute) that $37$ is the only non-surjective prime for this elliptic curve. So if $E(K)$ had a point $P$ of order $\ell\neq 37$, $\Q(P)$ would have to be a subfield of $K$ and $\ell^2-1$ would have to divide $12$. We see that the only possibility is that $\ell=2$. But the field $\Q(P)$ generated by a point of order $2$ will not be Galois over $\Q$, since the mod $2$ representation is surjective, and hence cannot be a subfield of the cyclic field $K$ (we see that $K$ is cyclic as it is generated by a point lying in the kernel of an isogeny, see \cite[Lemma 4.8]{DLNS}).

\end{proof}

\subsection{Points of order $17$}
We obtain similar results as in Lemma \ref{lem:37}, but for order $17$ and for number fields of degree $8$, which is the smallest degree over which an elliptic curve defined over $\Q$ can have a point of order $17$.
\begin{lemma} \label{lem:17}
Let $E/\Q$ be an elliptic curve. Then $E$ has a point of order $17$ over a degree $8$ number field $K$ if and only if $j_E=-17\cdot 373^3/2^{17}$. Moreover, $K$ has to be $K=\Q(\alpha,\sqrt{d\cdot f(\alpha)})$ where $f(x)=x^3 - 95115 x - 12657350$, $d\in \Q$ is such that $E$ is $\Q$-isomorphic to the elliptic curve $dy^2=f(x)$ and $\alpha$ is a root the irreducible polynomial
$$
g(x)=x^4 + 340x^3 + 510x^2 - 5560700x - 237673175.
$$
In particular $E(K)_{\tors}=\Z/17\Z$.
\end{lemma}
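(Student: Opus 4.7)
The plan is to follow the proof of Lemma \ref{lem:37} almost line-for-line, adapted to $\ell=17$ and degree $8$. First I would read off from \cite[Table 2]{GN?} that a $17$-torsion point on $E/\Q$ can lie in a degree-$8$ number field only when $G_E(17)$ equals a single specific conjugacy class of subgroups of $\GL_2(\F_{17})$ of Borel type. By Zywina's classification of non-surjective mod-$\ell$ images \cite[Theorem 1.10]{zyw}, this image occurs over $\Q$ precisely when $j_E=-17\cdot 373^3/2^{17}$; this is one of the two exceptional $j$-invariants for $\ell=17$ in Conjecture \ref{SUC}, and the other value $-17^2\cdot 101^3/2$ produces a mod-$17$ image that yields $17$-torsion over a number field of different degree.

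Next, taking $E':y^2=f(x)$ with $j_{E'}=-17\cdot 373^3/2^{17}$, I would factor $\psi_{17}(x)$ over $\Q$ and verify explicitly that the stated polynomial $g(x)$ is its unique irreducible factor of degree $4$. If $\alpha$ is a root of $g$, then $\alpha=x(P)$ for a point $P\in E'[17]$; since $[\Q(P):\Q]=8$ and $[\Q(\alpha):\Q]=4$, automatically $\Q(P)=\Q(\alpha,\sqrt{f(\alpha)})$. For a general $E/\Q$ with this $j$-invariant, $E$ is a quadratic twist of $E'$ and has a model $dy^2=f(x)$ for some $d\in\Q$; the corresponding $17$-torsion point is $R=(\alpha,\sqrt{f(\alpha)/d})$, and since the quadratic extensions $\Q(\alpha,\sqrt{f(\alpha)/d})$ and $\Q(\alpha,\sqrt{d\cdot f(\alpha)})$ of $\Q(\alpha)$ coincide, we obtain $K=\Q(R)=\Q(\alpha,\sqrt{d\cdot f(\alpha)})$.

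Finally, to prove $E(K)_{\tors}=\Z/17\Z$ I would mimic the final step of Lemma \ref{lem:37}. Full $17$-torsion is ruled out by the Weil pairing, since it would force $\Q(\zeta_{17})\subseteq K$ but $[\Q(\zeta_{17}):\Q]=16>8$. A point of order $17^2$ is ruled out by Lemma \ref{lem:p-power}, since $8<55$. Since the set of non-surjective primes depends only on $j_E$ \cite[Lemma 5.27]{Sutherland2}, I would consult the LMFDB entry of the elliptic curve of minimal conductor with $j_E=-17\cdot 373^3/2^{17}$ and check that $17$ is its only non-surjective prime; in particular $\overline\rho_{E,\ell}$ is surjective for every $\ell\neq 17$. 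A point $P$ of order $\ell\neq 17$ in $E(K)$ would then force $\Q(P)\subseteq K$ with $[\Q(P):\Q]=\ell^2-1$ dividing $8$, leaving only $\ell=3$ (so $K=\Q(P)$). But $K$ is cyclic over $\Q$ by \cite[Lemma 4.8]{DLNS} (being generated by a point in the kernel of a rational $17$-isogeny), whereas $\Q(P)$ for $P\in E[3]$ with $\overline\rho_{E,3}$ surjective is not Galois over $\Q$, a contradiction.

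The main obstacle I anticipate is unambiguously pinning down the Sutherland label of the relevant mod-$17$ image from \cite[Table 2]{GN?} and matching it to the correct exceptional $j$-invariant; all remaining steps are either a single explicit division-polynomial factorization or a direct transcription of the argument of Lemma \ref{lem:37}.
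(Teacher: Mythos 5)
Your proposal matches the paper's proof essentially line-for-line: the paper itself simply says ``by the same arguments as in Lemma~\ref{lem:37}'' and then repeats the Weil pairing argument, the appeal to Lemma~\ref{lem:p-power}, the $\ell^2-1\mid 8$ calculation leaving only $\ell=3$, and the cyclicity-of-$K$ versus non-Galois-$\Q(P)$ contradiction via \cite[Lemma~4.8]{DLNS}. The only (cosmetic) difference is that you correctly write ``only \emph{non-}surjective prime'' where the paper has an apparent typo (``only surjective prime''), and you spell out the elementary observation that $\Q(\alpha,\sqrt{f(\alpha)/d})=\Q(\alpha,\sqrt{d\cdot f(\alpha)})$, which the paper leaves implicit.
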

\begin{proof} By the same arguments as in Lemma \ref{lem:37}, we get that an elliptic curve $E/\Q$ such that $E$ gains a point of order $17$ over a number field $K$ of degree $8$ has $j_E=-1\cdot 2^{-17}\cdot 17\cdot 373^3$ (see \cite[Table 2]{GN?} and \cite[Theorem 1.10. (i)]{zyw}) and $17$ is the only surjective prime\footnote{This can be read off from LMFDB - see the footnote in Lemma \ref{lem:37}.} for all such curves. Note that in this case the quadratic twist with minimal conductor of $E'$ has LMFDB label  \href{http://www.lmfdb.org/EllipticCurve/Q/14450n1}{\texttt{14450.o2}}.

Let us prove $E(K)_{\tors}=\Z/17\Z$. The curve $E$ cannot have full $17$-torsion over $K$ by the Weil pairing and cannot have a point of order $17^2$ by Lemma \ref{lem:p-power}. So if $E(K)$ had a point $P$ of order $\ell\neq 17$, $\Q(P)$ would have to be a subfield of $K$ and $\ell^2-1$ would have to divide $8$. We see that the only possibility is that $\ell=3$. But there cannot be any points of order $3$ over $K$, as $K$ is cyclic (as it is generated by a point lying in the kernel of an isogeny, see \cite[Lemma 4.8]{DLNS}) and $\Q(P)$ will not be Galois over $\Q$ for any $P\in E[3]$.

\end{proof}

\subsection{Some special degrees}
From the results proved in this section, we immediately obtain the following result.
\begin{lemma} Let $d=22$ or $26$ and $E/\Q$ an elliptic curve. Then there is no primitive torsion growth over any number field of degree $d$. \label{lem:22-26}
\end{lemma}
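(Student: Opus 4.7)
The plan is to argue by contradiction: suppose $E/\Q$ has primitive torsion growth over a number field $K$ of degree $d\in\{22,26\}$. First I would observe that $K$ must equal the compositum of the fields $\Q(P)$ for $P\in E(K)_{\tors}$, since otherwise that compositum would be a proper subfield of $K$ containing all of $E(K)_{\tors}$, contradicting primitivity. Since $d = 2\cdot 11$ or $d = 2\cdot 13$ is not a power of $2$, and since combining only quadratic subfields produces a compositum of $2$-power degree, at least one generator $P\in E(K)_{\tors}$ must have $[\Q(P):\Q]$ divisible by $11$ or $13$ respectively. Write $\ell^k$ for the order of this $P$.

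Next I would reduce to $\ell^k=\ell$ prime and to a short list of candidate primes. By Propositions~\ref{prop:divisibility} and~\ref{lem:5-power} and Lemmas~\ref{lem:index},~\ref{lem:49},~\ref{lem:p-power}, every point of order $\ell^k\geq \ell^2$ with $\ell\geq 5$ lies in a field of degree $\geq 42$, so $k\geq 2$ is impossible for $\ell\geq 5$. For $\ell\in\{2,3,5,7\}$ the orbit size $[\Q(P):\Q]$ divides $|\GL_2(\Z/\ell^k\Z)|$, whose prime factors are all $\leq 7$, so neither $11$ nor $13$ can divide $[\Q(P):\Q]$. Lemmas~\ref{lem:17} and~\ref{lem:37} handle $\ell=17$ and $37$, giving minimal degrees $8$ and $12$. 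For $\ell\geq 19$ with $\ell\neq 37$, Conjecture~\ref{serre_conj} (unconditional for these primes thanks to the work of Mazur, Bilu--Parent and others) forces $G_E(\ell)=\GL_2(\F_\ell)$, so the minimal orbit on $E[\ell]\setminus\{0\}$ has size $\ell^2-1\geq 360$. This leaves only the cases $(\ell,d)=(11,22)$ and $(\ell,d)=(13,26)$.

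For these two remaining cases, I would split according to whether $E$ admits an $\ell$-isogeny over $\Q$. In the Borel case, $G_E(\ell)$ acts on $E[\ell]$ via two characters $\chi_1,\chi_2$ with $\chi_1\chi_2=\chi_\ell$; the orbit of a kernel-line vector has size $\mathrm{ord}(\chi_1)\leq \ell-1<\ell$, and the orbit of a non-kernel vector has size $\ell\cdot\mathrm{ord}(\chi_2)$ (since an orbit divisible by $\ell$ forces $G_E(\ell)$ to contain a nontrivial unipotent). An orbit of size $\ell$ or $2\ell$ therefore requires $\mathrm{ord}(\chi_2)\in\{1,2\}$; but $\mathrm{ord}(\chi_2)=1$ would place an $\ell$-torsion point of the isogenous curve $E/C$ over $\Q$, contradicting Mazur, and $\mathrm{ord}(\chi_2)=2$ would place such a point over a quadratic field, contradicting Kamienny--Mazur since $\Z/\ell\Z\notin\Phi(2)$ for $\ell\in\{11,13\}$. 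If $E$ has no $\ell$-isogeny, then $G_E(\ell)$ is contained in a non-Borel maximal subgroup (a normalizer of a split or non-split Cartan, or $\GL_2(\F_\ell)$); the possible orbit sizes can be read off from \cite[Table 1]{GN?} and are $\{20,100,120\}$ for $\ell=11$ and $\{24,144,168\}$ for $\ell=13$, none equal to $11,13,22,$ or $26$. The main obstacle in this plan will be the Borel case, where Kamienny--Mazur is the essential ingredient that excludes the small-order characters $\chi_2$.
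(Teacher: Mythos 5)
Your plan takes a genuinely different and much longer route than the paper. The paper simply observes that any torsion point $P$ of order $\ell^k$ over a degree-$d$ field $K=\Q(P)$ satisfies $\Q(P)\subset\Q(E[\ell^k])$, so $d$ divides $|G_E(\ell^k)|$, which divides $|\GL_2(\Z/\ell^k\Z)|$; for $\ell\leq 7$ this order has no prime factor $>7$, and for $\ell\geq 11$ the paper invokes \cite[Theorem 5.8]{GN?} outright (which already gives, unconditionally, the complete list of primes and degrees). Your proposal essentially re-derives, by hand, the content of that reference, which is a legitimate pedagogical exercise, and your opening observation that $K$ cannot be built from quadratic pieces alone is a cleaner framing of the reduction step than the paper's. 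But the hand-derivation has two concrete gaps.

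First, the claim that Conjecture~\ref{serre_conj} is ``unconditional for these primes thanks to the work of Mazur, Bilu--Parent and others'' is false, and the paper labels it as a conjecture for exactly this reason. What is known unconditionally for a non-CM $E/\Q$ and $\ell\geq 17$ (outside a finite explicit list) is that $G_E(\ell)$ is either all of $\GL_2(\F_\ell)$ or is contained in the normalizer of a non-split Cartan subgroup; the latter possibility has not been eliminated. For such a subgroup with surjective determinant, orbit sizes on $E[\ell]\setminus\{0\}$ are by no means forced to be $\ell^2-1$ — for instance, for $\ell=23$ one can build index-22 orbits group-theoretically — so you cannot conclude the bound $\geq 360$. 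To close this you would need the quantitative bounds that the paper imports via \cite[Theorems 3.2 and 5.6]{GN?}, which give $[\Q(P):\Q]\geq(\ell^2-1)/3$ for $\ell>13$ in the non-isogeny case without assuming Serre's conjecture.

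Second, the assertion that ``$\Z/\ell\Z\notin\Phi(2)$ for $\ell\in\{11,13\}$'' and its attribution to Kamienny--Mazur are both wrong: by Kenku--Momose, $\Z/11\Z$ and $\Z/13\Z$ are in $\Phi(2)$. The set you actually need is $\Phi_\Q(2)$ (torsion of base-changed rational curves), for which $\Z/11\Z,\Z/13\Z\notin\Phi_\Q(2)$; this is a result of the type proved in \cite{naj,GJT14,GJT15}, not Kamienny--Mazur. Since $E/C$ is indeed defined over $\Q$, the intended contradiction goes through once the citation is corrected, so this second gap is repairable, but as stated the step quotes a false fact.
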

\begin{proof}
Suppose the opposite, in particular that for some $P\in E(\overline \Q)_{tors}$, we have $[\Q(P):\Q]=d$. Let $K=\Q(P)$. From \cite[Theorem 5.8]{GN?} we see that there is no primitive $\ell$-torsion growth over $K$ for any prime $\ell$. Moreover, we see that there can be no points of order $\ell\geq 11$ over $K$ at all. It remains to check whether the $\ell$-power torsion cannot grow from a subfield of $K$ to $K$ for $\ell \leq 7$. If $P$ is a point of order $\ell^k$, then it would follow that $\Q(P)\subset \Q(E[\ell^k])$ for some $k$. So in particular $[\Q(P):\Q]=d$ divides $[\Q(E[\ell^k]):\Q ]=|G_E(\ell^k)|$. Since $G_E(\ell^k)$ is a subgroup of $\GL_2(\Z/\ell^k \Z)$, it follows that $d$ divides $\# \GL_2(\Z/\ell^k\Z)$. This is easily seen to be a contradiction for all $\ell \leq 7$.
By the same argument, the extension over which a subgroup of the form $\Z/l^m\Z \times \Z/l^n\Z$ is first defined cannot be of degree $22$ or $26$. More generally, one can deduce the same result for a group of the form $\Z/m\Z \times \Z/n\Z$ for integers $n$ and $m$ divisible by multiple primes.
\end{proof}



\section{The algorithm}
In this section we describe our algorithm. We always strive to make the algorithm useful in practice, and not to obtain an algorithm with small worst-case complexity. The reason for this is that in most cases, standard conjectures tell us that certain things will not happen, so we do not worry too much about the run-times of events that are conjecturally impossible. To give an explicit example, it is widely believed (see Conjecture \ref{serre_conj}) that $G_E(\ell)=\GL_2(\F_\ell)$ for all $\ell>37$ and all non-CM elliptic curves over $\Q$. Hence, we focus on trying to quickly prove that indeed $G_E(\ell)=\GL_2(\F_\ell)$, and not worry too much on the run-time of what happens if $G_E(\ell)\neq \GL_2(\F_\ell)$ for $\ell>37$, which, as already noted, conjecturally never happens.

We will use the following notation/definition in the algorithm.

\begin{definition}
For an elliptic curve $E/\Q$ and a positive integer $d$, we define $R(d,E)$ to be the set of primes such that there exists a number field $K$ of degree $d'|d$ such that there is primitive $\ell$-power torsion growth over $K$.
\end{definition}

Recall that in \cite{GN?} the set $R_\Q(d)$ is defined to be the set of all primes $\ell$ such that there exists a point of order $\ell$ on some elliptic curve $E/\Q$ over some number field of degree $d$. Note that $R_\Q(d)$ is unconditionally known for all $d<3.343.296$ (and in the larger cases we know a set containing $R_\Q(d)$), so for all values of $d$ in which one hopes to be able to run the algorithm.

%
%

\medskip

The algorithm consists of 3 sub-algorithms.

\medskip


{\bf \textsc{Algorithm 1: $R(d,E)$}}

{\sc Input:} An elliptic curve $E/\Q$ and integer $d$.

{\sc Output:} The set $R(d,E)$

\begin{enumerate}

\item Set $R(d,E):=\emptyset$.

\item \label{step1} If the largest prime divisor of $d$ is larger than $7$, exit this algorithm  and return $R(d,E)=\emptyset$.

\item Compute $R_\Q(d)$ using \cite[Corollary 6.1]{GN?}.

\item \label{step3} For $\ell \in R_\Q(d)$ compute $G_E(\ell)$.


\item For $\ell\in R_\Q(d)$ compute the degrees $n$ of number fields over which there is $\ell$-torsion, depending on $G_E(\ell)$ using \cite[Table 1 $\&$ 2]{GN?} and \cite[Theorem 3.2]{GN?} for non-CM curves and
    \cite[Theorem 3.6 and 5.6 ]{GN?} for CM curves. If any such $n$ divides $d$, add $\ell$ to $R(d,E)$.
\item[(6)] Return $R(d,E)$.
\end{enumerate}
\begin{remark}
Algorithm 1 is used to determine the (finite) set of primes $\ell$ such that there will be primitive $\ell$-power torsion growth over number fields of degree $d'$ dividing $d$.
\end{remark}

\begin{remark}
Step \eqref{step1} follows from \cite[Theorem 7.1. (i)]{GN?}. In step \eqref{step3}, we compute $G_E(\ell)$ using the algorithm sketched in \cite[1.8.]{zyw}.
\end{remark}

\medskip

{\bf \textsc{Algorithm 2: $\ell$-primary torsion growth}}

In this algorithm we will store a point or points generating the torsion group of $E(K)$. These are necessary for computing the $\ell$-power torsion, but will not be returned in the output of the algorithm (although they could be), as they will not be necessary. We will also store an auxiliary sequence $F$ of pairs $(F_i,(P_i,Q_i))$, where $F_i=\Q(E[\ell^i])$ and $P_i$ and $Q_i$ generate the $\ell^i$-torsion of $E$ and such that $[F_i:\Q]$ divides $d$. In Algorithm 2, $F_i$ will always denote $\Q(E[\ell^i])$.

\medskip

\textsc{Input:} An elliptic curve $E/\Q$, $d\in \Z_+$, a prime $\ell$

\textsc{Output:} A set $A$ of all pairs $(K, T)$ such that $E$ has primitive $\ell$-power torsion growth over $K$, the group $T:=E(K)[\ell^\infty]$ and such that $[K:\Q]$ divides $d$.

\begin{enumerate}
  \item $A:=\emptyset$ and $F:=\emptyset$.
  \item \label{step2.1} If $E(\Q)[\ell]\neq \{0\}$: Set $A:=A \cup (\Q, E(\Q)[\ell],S)$, where $S$ is a set of generators of $E(\Q)[\ell]$. If $\#G_E(\ell)$ divides $d$, then factor $\psi_\ell$, set $F_1=\Q(E[\ell])$\footnote{We have $\Q(E[\ell])=F_1$ by \cite[Lemma 5.17]{Sutherland2}.} to be the field defined by an irreducible factor of degree $>1$ and set $A:=A \cup (F_1, (\Z/\ell\Z)^2, S)$ and $F:=(F_1, S)$, where $S$ is a set of generators of $E[\ell]$.


  \item \label{step2.2}If $E(\Q)[\ell]=\{0\}$: 
      Explicitly determine the triples $(K_i:=\Q(P_i), \Z/\ell\Z, \{P_i\})$ for all $P_i\in E[\ell]$ by factoring the $\ell$-division polynomial $\psi_\ell$, keeping only one number field up to isomorphism. Add all these triples to $A$. For all $K_i$ constructed, check whether $\#G_E(\ell)=[K_i:\Q]$ for any $i$; if yes, change $(K_i, \Z/\ell\Z,\{P_i\})$ to $(K_i, (\Z/\ell\Z)^2, S)$ and $F:=F \cup (F_1, S)$, where $S$ generates $E[\ell]$.


  \item \label{step2.3} Set $k:=2$. Repeat: if ($\ell<11$ or $d\geq 55$) and ($\ell \neq 5$ or $k=2$ or $d\geq 50$) and ($\ell \neq 7$ or $d\geq 42$)
        \begin{itemize}
          \item[(i)] Compute the primitive $\ell^k$-division polynomial $\psi_{\ell^k}/\psi_{\ell^{k-1}}$, as a polynomial in $\Z[x]$, reduce it modulo small primes $p$ of good reduction different from $\ell$, factor it over $\F_p[x]$, and check whether there are any irreducible factors of degree dividing $d$ for each prime $p$. If not, then exit the loop.

          \item[(ii)] \label{stepii} 
              Now for each element $(K_i, T, S)$ that we have in $A$, for each cyclic subgroup of $T$ of order $\ell^{k-1}$ (if it exists): select a generator $Q$. Factor over $K_i$ the polynomial
\begin{equation}
\label{div_pol}
\phi_\ell(x) - x(Q)\psi_\ell(x)^2=g_1(x)\cdot \ldots \cdot g_u(x),
\end{equation}
where $\phi_\ell$ and $\psi_\ell$ are as defined in \cite[Chapter 3.2. p.81]{was} \footnote{We use \cite[Corollary 5.18]{Sutherland2} where possible. By \cite[Theorem 3.6]{was} we have that
              $
x(Q)=\frac{\phi_\ell(x)}{\psi_\ell(x)^2},
$
for any $P=(x,y)$ such that $Q=\ell P$. Using this step is crucial (instead of factoring $\ell^k$-division polynomials) as one uses the polynomial \eqref{div_pol} of degree $\ell^2$ (over number fields) instead of factoring (over $\Q$) the primitive $\ell^k$-division polynomial, which is of degree $\ell^{2k-2}(\ell^2-1)/2$.
}.
Let $P_i$ be a point of order $\ell^k$ such that $x(P_i)$ is a root of $g_i$.

 If $[\Q(P_i):\Q]$ divides $d$, define $T'$ by as follows: if $T$ was $\Z/\ell^{k-1} \Z \times \Z/\ell^j \Z$ for some $j$, then $T':=\Z/\ell^{k} \Z \times \Z/\ell^j \Z$. Add the field $\Q(P_i)$, the subgroup $T'$ and its generators into $A$, where the generators of $T'$ are obtained by taking the generators of $T$ and replacing $\ell P_i$ by $P_i$.


          \item[(iii)] \label{stepiii} For each element $(K_i, \Z/{\ell^k}\Z\times \Z/{\ell^n}\Z, S)$ in $A$, check whether $K_iF_j$ is of degree dividing $d$ for $j=n+1,\ldots, k-1$. If yes, add $(K_iF_j, \Z/{\ell^k}\Z\times \Z/{\ell^j}\Z, S')$ to $A$ and if furthermore $K_iF_j=K_i$, then remove the triple $(K_i, \Z/{\ell^k}\Z\times \Z/{\ell^n}\Z, S)$ from $A$.


          \item[(iv)] Check whether $F_k$ is of degree dividing $d$ by checking whether in $A$ there exists an entry $(K_i, \Z/{\ell^k}\Z\times \Z/{\ell^{k-1}}\Z, S)$; if yes, check whether the element $P\in S$ of order $\ell^{k-1}$ is divisible by $\ell$ over $K_i$. If yes, change the previous entry into $(K_i, \Z/{\ell^k}\Z\times \Z/{\ell^{k}}\Z, S')$, where $S'=\{Q, R\}$ is obtained from $S=\{P,Q\}$, where $Q$ is of order $\ell^k$ and $\ell R=P$ and add $(F_k:=K_i, S')$ to $F$.

          \item[(v)] $k:=k+1$;
        \end{itemize}
        until the first occurrence that there are no points of order $\ell^{k}$ in A.
    \item \label{return_A} Return A.
\end{enumerate}

\begin{remark}
The conditions at the beginning of \eqref{step2.3} come from Lemmas \ref{lem:5-power}, \ref{lem:49} and \ref{lem:p-power},  and make the algorithm much faster for "small" ($<50$) degrees, i.e. in all the ones where it is feasible to use the algorithm in practice.

In \eqref{stepiii} (iii), if $K_iF_j$ is not of degree dividing $d$, then neither is $K_iF_{j+1}$, so we can stop for the smallest $j$ such $K_iF_j$ is not of degree dividing $d$.

In \eqref{return_A}, the generators of the torsion groups can be deleted from $A$, as they will not be used again later.

\end{remark}

\medskip

{\bf \textsc{Algorithm 3: Combining different $\ell$-primary torsion growths}}

\textsc{Input:} A positive integer $d$, a set $A$ of all pairs $(K, T)$ such that $E$ has primitive $\ell$-power torsion growth over $K$ for some prime $\ell$, where $[K:\Q]$ divides $d$, and the group $T:=E(K)[\ell^\infty]$.

\textsc{Output:} A set $B$ of all pairs $(K, T)$ where $E$ has primitive torsion growth over $K$ and such that $[K:\Q]$ divides $d$, and where $T:=E(K)_{\tors}$.

\begin{enumerate}
  \item \label{step3x} To each pair $(K, T)$ previously obtained we adjoin the set $\{(\ell, K^\ell)\}$ where $\ell$ is a prime such that $T$ is an $\ell$-group and $K^\ell:=K$ . So we get triples $(K,T,\{(\ell, K)\})$. For a triple $(K,T,S)$, where $S:=\{(\ell, K)\}$, we will denote by $S':=\bigcup_{a \in S} {a[1]}$ the set of all first coordinates of $S$.
      \item Set $k:=2$; Repeat: new:=false;
  \begin{itemize}
    \item[(i)] For each pair of triples $(K_i,T_i,S_i)$ and $(K_j,T_j,S_j)$ satisfying $|S_i' \cup S_j'|=|S_i\cup S_j|=k$ check whether the degree of $K_iK_j$ divides $d$. If yes put new:=true and construct the triple $(K_iK_j, T, S_i \cup S_j)$ where
        $$T=\prod_{\ell \in (S_i \cup S_j)'} T[\ell^\infty]$$
        and add it to the set.

    \item[(ii)] If new=false, exit the loop and return the obtained results, forgetting the third element of the triples from $B$, i.e returning just the values $(K,T)$. If new=true, set $k:=k+1$.
   \end{itemize}
\end{enumerate}
\begin{remark}
Note that in the previous algorithm the elements in $S_i$ and $S_j$ are the same only if both coordinates are the same.
\end{remark}

Finally the whole algorithm:

\medskip
{\bf \textsc{Algorithm TorsionGrowth}}

{\sc Input:} An elliptic curve $E/\Q$ and a positive integer $d$.

{\sc Output:} A sequence of all pairs $(K, T)$ of a number field $K$ of degree $d'$ such that $d'|d$ and that $E$ has primitive torsion growth over $K$, together with the group $T:=E(K)_{\tors}$.
\medskip
%
%
%
%

\begin{enumerate}
  \item $R(d,E):=\texttt{Algorithm1(E,d)}$
  \item $A:=\emptyset$\\
  	For $\ell \in R(d,E)$:\\
	 \,$\mbox{\,}\qquad\qquad A:=A\,\cup\,\texttt{Algorithm2(E,$\ell$,d)}$
  \item $B:=\texttt{Algorithm3(A,d)}$
  \item Check whether in $B$ there are pairs $(K_1, T_1)$ and $(K_2,T_2)$ such that $K_1\simeq K_2$ and $T_1\geq T_2$. If yes, remove $(K_2, T_2)$ from $B$.
  \item Return $B$
\end{enumerate}

\section{Computational results}\label{sec:lmfdb}

One of the main motivations for the development of our algorithm is to get computational evidence of how the torsion grows when we consider an elliptic curve defined over $\Q$ base change to a number field of fixed degree.

Our algorithm takes as input an elliptic curve $E$ defined over $\Q$ and a positive integer $d$ and outputs all the pairs $(K,H)$ (up to isomorphism) where $K$ is a number field of degree dividing $d$, $E$ has primitive torsion growth over $K$, and $E(K)_{\tors}\simeq H $. We denote by $\mathcal{H}_{\Q}(d,E)$ the multiset formed by 
the groups $H$ obtained in the above computation. Note that we are allowing the possibility of two (or more) of the torsion subgroups $H$ being isomorphic if the corresponding number fields $K$ are not isomorphic. We call the set $\mathcal{H}_{\Q}(d,E)$ the set of {\it torsion configurations of degree $d$} of the elliptic curve $E/\Q$. We let $\mathcal{H}_{\Q}(d)$ denote the set of $\mathcal{H}_{\Q}(d,E)$ as $E$ runs over all elliptic curves defined over $\Q$ such that $\mathcal{H}_{\Q}(d,E)\ne \{E(\Q)_{\tors}\}$, that is $E$ has torsion growth over a number field of degree $d$. For $S\in \mathcal{H}_{\Q}(d)$ define $N_\Q(S)$ to be the minimum conductor $N_\Q(E)$ such that $\mathcal{H}_{\Q}(d,E)=S$ and we denote by $N_\Q(d)$ the maximum\footnote{Note that the smallest integer $B$ such that for every torsion group $T$ possible over $\Q$ there exists an elliptic curve $E$ with $E(\Q)_{\tors}= T$ and $N_\Q(E)\leq B$ is $B=210$.} of $N_\Q(S)$ for all $S\in\mathcal{H}_\Q(d)$. Note that if we denote the maximum of the cardinality of the sets $S$ when $S\in \mathcal{H}_{\Q}(d)$ by $h_\Q(d)$, then $h_\Q(d)$ gives the maximum number of field extension of degrees dividing $d$ where there is primitive torsion growth. The sets $\mathcal{H}_{\Q}(d)$ have been completely determined for $d=2,3,5,7$ and for any $d$ not divisible by a prime smaller than $11$ (see \cite{GJT15,GJNT15,GJ17,GN?}). From these results, one can read out the value of $h_\Q(d)$ for $d=2,3,5,7$ (see \cite{najtwist} for a different approach to obtain $h_\Q(2)$). For $d=4,6$, exhaustive computations to obtain bounds on the above sets and values have been carried out (see \cite{GJLR18,DGJ18}).

As $d$ grows, all these problems become much more difficult, so it makes sense to obtain lower bounds on some of these sets, where possible. We will obtain such a lower bound for $d\leq 23$, by finding all the possible torsion groups of the $2.483.649$ elliptic curves of conductor less than $400.000$ over number fields of degree up to $23$. We chose to stop at $23$ (although it could probably be feasible to do computations for a few more degrees), as this is the largest degree of number fields that have been included in the LMFDB at the moment of writing of this paper. The algorithm has been implemented in \texttt{Magma} \cite{magma} and can be found in the \href{http://matematicas.uam.es/~enrique.gonzalez.jimenez/research/tables/algorithm/algorithm.html}{\color{blue}online supplement} \cite{MagmaCode}.

Table \ref{table} gives a short overview of our computations. For the sake of simplicity we denote in Table \ref{table} by $(n)$ and $(n,m)$ the groups $\Z/n\Z$ and $\Z/n\Z\times \Z/m\Z$, respectively. The values in the table are:
\begin{itemize}
\item  1\textsuperscript{st} column: degree $d$.
\item 2\textsuperscript{nd} column: the set $\Theta(d)$ consisting of all the possible torsion subgroups $H$ such that there exists an elliptic curve $E/\Q$ and a number field $K$ of degree $d$ such that there is primitive torsion growth over $K$ and such that $E(K)_{\tors}=H$. Or in the other words, the subgroups in $\Phi_\Q(d)$ that do not appear in $\Phi_\Q(d')$ for any proper divisor $d'|d$.
\item 3\textsuperscript{rd} column:  a lower bound of $h_\Q(d)$ (or the exact value, where it is known), the maximum number of field extension of degrees dividing $d$ where there is primitive torsion growth.
\item 4\textsuperscript{th} column:  a lower bound of $N_\Q(d)$, the minimum value such that there exist elliptic curves over $\Q$ of conductor less than $N_\Q(d)$ with every possible torsion configuration over number fields of degree $d$.
\item 5\textsuperscript{th} column:  a lower bound of $\#\mathcal{H}_\Q(d)$, the number of torsion configurations over number fields of degree $d$.
\end{itemize}


\begin{table}[h]
\begin{tabular}{|c|c|c|c|c|} \hline
  $d$ & $\Phi_\Q(d)\setminus\cup_{d'|d, d'<d}{\Phi_\Q(d')}\supseteq$ & $h_\Q(d)$ & ${N}_\Q(d)$ & $\#\mathcal{H}_\Q(d)$ \\ \hline \hline
   $1$ & $\{(1),(2),(3),(4),(5),(6),(7),(8),(9),(10),(12),(2,2),(2,4),(2,6),(2,8)\}$ & $-$ & $-$ & $-$ \\ \hline \hline
   $2$ &   $\{(15),(16),(2,10),(2,12),(3,3),(3,6),(4,4)\}$ &  $4$ & {$3150$} &  $52$ \\ \hline
  $3$ &   $\{(13),(14),(18),(21),(2,14)\}$ &  $3$ &  {$3969$} &  $26$  \\ \hline
$4$ &    $\{(13),(20),(24),(2,16),(4,8),(5,5),(6,6)\}$ &  $\ge 9$ & $\ge 14400$ & $\ge 130$   \\ \hline
  $5$ &   $\{(11),(25)\}$ &  $1$ & {$121$} &  $4$   \\ \hline
 $6$ &   $\{(30),(2,18),(3,9),(3,12),(4,12),(6,6)\}$ & $\ge 9$ & $\ge 10816$ & $\ge 137$  \\ \hline
 $7$ &   $-$ &  $1$ & \ {$26$} &  $1$  \\ \hline
 $8$ &   $\{(17),(21),(30),(32),(2,20),(2,24),(3,12),(4,12)\}$ & $\ge 17$ & $\ge 277440$ & $\ge 275$  \\ \hline
 $9$ &  $\{(19),(26),(27),(28),(36),(42),(2,18)\}$ & $\ge 6$ & $\ge 3969$ & $\ge 34$  \\ \hline
 $10$ & $-$ & $\ge 4$ & $\ge 3150$ & $\ge 58$  \\ \hline
 $12$ & $\left\{\begin{array}{c}( 26 ), ( 28 ), ( 36 ), ( 37 ), ( 42 ), \\( 2, 28 ), ( 2, 30 ), ( 2, 42 ), ( 3, 15 ), ( 3, 21 ),( 5, 10 ), ( 6, 12 )\end{array}\right\}$ & $\ge 19$ & $\ge 18176$ & $\ge 268$  \\ \hline
 $14$ & $-$ & $\ge 4$ &  $\ge 3150$ & $\ge 52$  \\ \hline
 $15$ & $\{( 22 ), ( 50 )\}$ & $\ge 3$ & $\ge 3969$ & $\ge 30$  \\ \hline
 $16$ & $\{( 40 ), ( 48 ), ( 2, 30 ), ( 2, 32 ), ( 3, 15 ), ( 4, 16 ), ( 4, 20 ), ( 5, 15 ), ( 6, 12 ), ( 8, 8 )\}$ & $\ge 25$ & $\ge 277440$ & $\ge 480$  \\ \hline
 $18$ & $\{( 45 ), ( 2, 26 ), ( 2, 36 ), ( 2, 42 ), ( 3, 18 ), ( 3, 21 ), ( 4, 28 ), ( 6, 18 ), ( 7, 7 ), ( 9, 9 )\}$ & $\ge 17$ & $\ge 254016$  & $\ge 192$  \\ \hline
 $20$ & $\{( 22 ), ( 33 ), ( 2, 22 ), ( 5, 10 ), ( 5, 15 )\}$ & $\ge  9 $ &$\ge 14400$  & $\ge 149$  \\ \hline
 $21$ & $\{ ( 43 )\}$ & $\ge 3$ & $\ge 3969$ & $\ge 29 $  \\ \hline
\end{tabular}
\captionof{table}{Bounds on $\Phi_\Q(d)$ for $d\le 23$.}\label{table}
\end{table}
\newpage
\begin{remark}
In Table \ref{table} the degrees over which we know that there is no primitive torsion growth ($d=11,13,17,19,22,23$) have been excluded. The fact that there is no primitive torsion growth over number fields of degree $d=22$ follows from Lemma \ref{lem:22-26}.
\end{remark}

Table \ref{table} gives some useful information to conjecture upon. Note that any group in $\Phi_\Q(d)$ will also arise in $\Phi_\Q(dk)$ for any $k\in \Z^+$ \cite[Theorem 2.1. a)]{bcs}. We conjecture that the groups we found are all that are possible.

\begin{conjecture}\label{main_conjecture} Let $d\leq 23 $ and define $\Theta(d)$ to be the set of groups found in Table \ref{table} for each $d$. Then $\Phi_\Q(d)$ consists of the union of all $\Theta(d')$ such that $d' \mid d$.
\end{conjecture}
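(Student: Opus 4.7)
The plan is to prove both inclusions separately. The inclusion $\bigcup_{d'\mid d}\Theta(d')\subseteq \Phi_\Q(d)$ is essentially automatic: each group listed in $\Theta(d')$ is realized by an explicit elliptic curve of conductor at most $N_\Q(d)$ base-changed to an explicit degree-$d'$ field, as recorded in Table~\ref{table}. The substance of the conjecture is the reverse inclusion, which asserts that no torsion group beyond those already observed can occur over any degree-$d$ number field, no matter how large the conductor. I would organise the proof around the primary decomposition $E(K)_{\tors}=\bigoplus_\ell E(K)[\ell^\infty]$ and treat each prime $\ell$ separately, then combine the constraints using Algorithm~3 of the paper.

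First I would restrict the primes: by \cite[Corollary 6.1]{GN?} and \cite[Theorem 7.1]{GN?}, only primes $\ell\le 7$ (together with the finite exceptional primes $11,13,17,19,37$ appearing in $R_\Q(d)$ for the relevant $d\le 23$) can contribute. For each such $\ell$, I would enumerate the possible images $G_E(\ell^k)\subseteq \GL_2(\Z/\ell^k\Z)$ for elliptic curves $E/\Q$, using the classifications of Rouse--Zureick-Brown for $\ell=2$, of Rouse--Sutherland--Zureick-Brown for $\ell=3$, of Sutherland--Zywina for composite level, and of \cite{Greenberg,zyw} for primes $\ell\ge 5$. For each admissible image, I would compute the degrees over $\Q$ of the residue fields of the corresponding points on the modular curves $X_1(\ell^k)$ and $X_1(\ell^a,\ell^b)$, and check that any subgroup structure achievable in a degree dividing~$d$ is already in $\Theta(d')$ for some $d'\mid d$. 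This is essentially an exhaustive version of Algorithm~2 run uniformly over all of $\Phi_\Q(1)$ rather than a single curve.

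Second I would combine primes. Given the $\ell$-primary data from the previous step, the allowed composite torsion subgroups over a field of degree dividing $d$ are constrained by the compositum condition that $[K_{\ell_1}K_{\ell_2}:\Q]\mid d$, exactly as in Algorithm~3. Running this compositum check on the finite catalogue of $\ell$-primary possibilities produces a finite set of candidate torsion configurations, which I would then verify coincides with $\bigcup_{d'\mid d}\Theta(d')$. Steps 1--2 already give more than enough for many of the easier degrees: for example, the degrees $11,13,17,19,22,23$ drop out immediately from the Galois-image analysis (see Lemma~\ref{lem:22-26} for $d=22$), and degrees where only very few primes contribute (such as $d=5,7$) reduce to a short case check that is already in the literature \cite{GN?,GJ17}.

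The hard part will be the highly composite degrees $d\in\{12,16,18,20\}$, where the $2$-primary and $3$-primary growth interact in many ways, and where the classification of possible mod-$2^k$ and mod-$3^k$ images for $E/\Q$ is genuinely complicated. In particular, one must rule out sporadic points of low degree on $X_1(m,n)$ beyond the two degree-$6$ points on $X_1(4,12)$ identified in the paper. For primes $\ell\in\{11,13,17,37\}$, the input from \cite{zyw} together with Lemmas~\ref{lem:37} and~\ref{lem:17} should suffice, since the mod-$\ell$ images are pinned down by the $j$-invariant; here one must then verify that no higher $\ell$-power torsion and no unexpected additional torsion from other primes can coexist with a degree dividing $d$. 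The genuine obstacle is the $\ell=2$ case: a complete verification seems to require either a large finite computation on the database of known $2$-adic images, or a direct analysis of rational points on the relevant modular curves $X_H$ of $2$-power level, and it is precisely the completeness of this classification that the conjecture is implicitly betting on.
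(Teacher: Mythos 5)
This statement is a \emph{conjecture} in the paper, not a theorem: the authors offer no proof, only computational evidence (the exhaustive run of their algorithm over all elliptic curves of conductor below $400.000$, summarised in Table~\ref{table} and discussed heuristically in Remark~\ref{rem}). So there is no ``paper's own proof'' to compare your attempt against, and you should be careful not to present your text as a proof when in fact it is a proof strategy with an explicitly acknowledged gap.

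That said, your outline of what a proof \emph{would} require is accurate and essentially mirrors the structure of the paper's algorithm applied uniformly: separate the primes using the results of \cite{GN?}, classify the possible $\ell$-adic images for each relevant $\ell$, compute residue-field degrees on the relevant modular curves, then combine primes via a compositum condition as in Algorithm~3. You are also right that the trivial inclusion $\bigcup_{d'\mid d}\Theta(d')\subseteq \Phi_\Q(d)$ is witnessed by the explicit curves found in the computation. The genuine gap you identify at the end is the correct one: the reverse inclusion requires a complete and proven classification of mod-$2^k$ and mod-$3^k$ images for $E/\Q$ (and, more generally, a determination of all low-degree points on the relevant modular curves $X_H$ and $X_1(m,n)$), and as of the paper's writing this is not available. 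It is precisely because that classification is incomplete, and because sporadic low-degree points cannot currently be ruled out (the paper itself finds a previously unknown sporadic degree-$6$ point on $X_1(4,12)$), that the statement is framed as a conjecture rather than a theorem. Your proposal is therefore a reasonable roadmap but does not, and cannot with current inputs, constitute a proof; the paper does not claim one either.
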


\begin{remark}\label{rem}
 The 4\textsuperscript{th} column in Table \ref{table} gives a lower bound for $N_\Q(d)$. When this value is very far from $400.000$ (the bound for the conductor up to which we tested), this might suggest more strongly that the corresponding $\Phi_\Q(d)$ is as stated in Conjecture \ref{main_conjecture}, and one should consider that the case for the conjecture stronger in these cases. This happens for $d\ne 8,16,18$. The 3\textsuperscript{rd} and 5\textsuperscript{th} columns and the values $h_\Q(d)$ and $\# \mathcal H_\Q(d)$ give information about the complexity of the torsion growth and how often it happens over the given degree $d$. The values  seem to grow with the powers of $2$ and $3$ dividing $d$, which is to be expected. The highest values correspond to $d=16,12,18,8$ in that order. In particular, when $d$ is divisible by a power of $2$ these values grow considerably.
\end{remark}

In the \href{http://matematicas.uam.es/~enrique.gonzalez.jimenez/research/tables/algorithm/algorithm.html}{\color{blue}online supplement} \cite{MagmaCode} we give more data about our computations. For each degree $d\le 23$ we include the following:
\begin{itemize}
\item For any $G\in\Phi_\Q(1)$ we include a table with a lower bound for the set $\Phi_\Q(d,G)$.
\item For each torsion configuration $S\in\mathcal{H}_\Q(d)$ obtained, we provide the Cremona label \cite{cremonaweb} of the elliptic curve $E/\Q$ with minimal conductor such that $S=\mathcal{H}_\Q(E,d)$.
\end{itemize}

\begin{remark}
At the moment of writing this paper, each elliptic curve defined over $\Q$ with conductor less than $400.000$ and for any degree $d\le 7$, the data obtained with our algorithm appears in \href{http://www.lmfdb.org/EllipticCurve/Q/}{LMFDB}. We have in plan to include all the data for $d\le 23$. Moreover, all our data is already at the {\it Cremona's Elliptic Curve Data} \cite{cremonaweb} in {\it Table Eleven: Torsion Growth}.
\end{remark}

\subsection{Primitive torsion growth} An interesting question is to restrict our attention to the case of primitive torsion growth of exactly a fixed degree instead of the whole growth over number fields of degree dividing a fixed degree. For a positive integer $d$, we denote by $\Psi_\Q(d)\subseteq \Phi_\Q(d)$ the set of groups, up to isomorphism, that appear as primitive torsion growth of an elliptic curve defined over $\Q$ over a number field of degree $d$. In the same vein, we define $\Psi_\Q(d,G)$, $\mathcal{G}_\Q(d,E)$, $\mathcal{G}_\Q(d)$, $g_\Q(d)$, $M_\Q(d)$ analogously as we did $\Phi_\Q(d,G)$, $\mathcal{H}_\Q(d,E)$, $\mathcal{H}_\Q(d)$, $h_\Q(d)$, $N_\Q(d)$, respectively.

In Table \ref{tablepsi} we include a lower bound for the set $\Psi_\Q(d)$ for $d\le 23$. In particular, in each line the first column is the degree $d$, the second column includes the cyclic groups $\Z/n\Z$, denoted by $(n)$, that we have obtained, and the rest of the columns $\Z/m\Z\times\Z/mn\Z$, denoted by $(m,mn)$, for $2\le m\le 9$.

In Table \ref{psi} we show lower bounds for the values  $g_\Q(d)$, $M_\Q(d)$ and $\#\mathcal{G}_\Q(d)$ for $d\le 23$ non-prime.

Again, in the \href{http://matematicas.uam.es/~enrique.gonzalez.jimenez/research/tables/algorithm/algorithm.html}{\color{blue}online supplement} \cite{MagmaCode} we give more data which gives lower bounds on the sets $\Psi_\Q(d,G)$ and the Cremona labels of the elliptic curves $E/\Q$ with minimal conductor for each torsion configuration in $\mathcal{G}_\Q(d)$ that we have obtained.
\begin{table}[h]
\begin{tabular}{|c|c|c|c|c|c|c|c|c|c|c|}
\hline
$d$ & $(n)$ & $(2,2n)$ & $(3,3n)$ & $(4,4n)$ & $(5,5n)$ & $(6,6n)$ & $(7,7n)$ & $(8,8n)$ & $(9,9n)$\\
\hline
2 &  3-10,12,15,16 &1-6 & 1,2 & 1 & -&- & - & -  &  -\\ \hline
3 &  \begin{tabular}{c}2-4,6,7,9,10,\\ 12-14,18,21\end{tabular} & 1,3,7  &  -&- & -&- & -&- & -\\ \hline
4 &  \begin{tabular}{c}3-6,8,10,12,\\13,15,16,20,24\end{tabular}& 2-6,8 & 1,2 & 1,2 & 1 & 1 & -&- & -\\ \hline
5 &  5,10,11,25 & -&- & -&- & -&- & -& -\\ \hline
6 & \begin{tabular}{c}3,4,6,7,9,10,\\ 12-15,18,21,30 \end{tabular} & 1,3,5-7,9 & 1-4 & 1,3 &-& 1 &-& -& -\\ \hline
7 & 7  & -& -& -& -& -& -& -& -\\ \hline
8 & \begin{tabular}{c}3,5,6,8,10,12,15,16,\\17,20,21,24,30,32\end{tabular} & 2-6,8,10,12 & 1,2,4 & 1-3 & 1 & 1 & -& -& -\\ \hline
9& \begin{tabular}{c}6,7,9,12,14,18,\\19,21,26-28,36,42\end{tabular} & 3,7,9 &- &- &- &- &- &- & -\\ \hline
10 & 5,10,11,15,25 & 5 &- &- &- & -& -& -&-\\ \hline
12 &  \begin{tabular}{c}4,6,7-10,12-15,18,20,\\ 21,24,26,28,30,36,37,42\end{tabular}  & \begin{tabular}{c}2,3,5,6,7,\\9,14,15,21 \end{tabular} & 1-5,7 & 1,3 & 2 & 1,2 &- &- &- \\ \hline
14 & 7  &- &- &- &- &- &- &- &- \\ \hline
15 & 10,22,50 &- &- &- &- & -& -& -&-\\ \hline
16 &\begin{tabular}{c}5,8,10,12,15,16,17\\ 20,21,24,30,32,40,48\end{tabular} & \begin{tabular}{c} 3-6,8,10,\\12,15,16 \end{tabular}& 1,2,4,5 &1-5 & 1,3 & 1,2 & -&1 &- \\ \hline
18 &\begin{tabular}{c} 6,7,9,12,14,18,19,21,\\26-28,30,36,42,45 \end{tabular}& 3,7,9,13,18,21 & 2-4,6,7 & 3,7 & -& 1,3 & 1 & - & 1\\ \hline
20& 5,10,11,15,20,22,25,33 & 5,11 &- & - &1-3& -& -& -&-\\ \hline
21 & 7,14,21,43 & 7&- &- &- &- &- &- &-  \\ \hline
\end{tabular}
\captionof{table}{Bounds on $\Psi_\Q(d)$ for $d\le 23$.}\label{tablepsi}
\end{table}

\begin{table}[h]
\begin{tabular}{|c|c|c|c|c|c|c|c|c|c|c|c|c|c|} \hline
$d$ & $4 $ & $6 $  & $8 $ & 9$ $ & $10 $ & $12 $ & $14 $ & $15 $ & $16 $ & $ 18$ & $20 $ & $21 $\\ \hline
$g_\Q(d)\ge $ & 5 & 5& 9& 3& 1& 6&1 &1 &10 &6 &3 &1 \\ \hline
$M_\Q(d)\ge $   & 18176 & 5184 &223494 & 3969 & 150& 18176& 208& 121& 277440& 254016 & 18176 & 1922\\ \hline
$\#\mathcal{G}_\Q(d)\ge $  &104 &88 &200 &20 &7 &134 &1 &3 &336 &101 &26 &6\\ \hline
\end{tabular}

\captionof{table}{Data for $\Psi_\Q(d)$}\label{psi}
\end{table}
\newpage

Similarly to Conjecture \ref{main_conjecture} we can state the following conjecture in the case of primitive torsion growth:

\begin{conjecture}\label{main_conjecture2} Let $d\leq 23 $ and define $\Omega(d)$ to be the set of groups found in Table \ref{tablepsi} for each $d$. Then $\Psi_\Q(d)=\Omega(d)$.
\end{conjecture}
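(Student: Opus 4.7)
The plan is to establish the nontrivial inclusion $\Psi_\Q(d)\subseteq\Omega(d)$, since the reverse inclusion $\Omega(d)\subseteq\Psi_\Q(d)$ is witnessed directly by the curves of conductor at most $400{,}000$ produced by the algorithm. Fix $d\le 23$ and suppose for contradiction that some $T\in\Psi_\Q(d)\setminus\Omega(d)$ is realized by an elliptic curve $E/\Q$ and a number field $K/\Q$ of degree exactly $d$, with $E(F)_{\tors}\subsetneq E(K)_{\tors}=T$ for every proper subfield $F\subsetneq K$. I would first shrink the candidate list for $T$ using the auxiliary lemmas of Section \ref{sec:if-then} together with \cite[Theorems 3.2, 3.6, 5.6]{GN?}: these pin down the primary orders $\ell^k$ that can possibly appear in $T$, and for each admissible $\ell^k$ they force $K$ to contain a specific subfield of $\Q(E[\ell^k])$. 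Combined with the classification of $\Phi^\infty(d)$ for $d\le 6$ \cite{jks,jkp-quartic,DS16} (which governs the generic infinite part) and the classification of $G_E(\ell)$ for $\ell\le 13$ via \cite{Sutherland2,zyw}, one is reduced to finitely many candidate pairs $(T,d)$ to eliminate.

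For each such candidate, the natural tool is the modular curve $X_1(m,mn)$ where $T\simeq\Z/m\Z\times\Z/mn\Z$: a primitive degree-$d$ occurrence of $T$ from an elliptic curve over $\Q$ corresponds to a non-cuspidal point $P\in X_1(m,mn)(K)$ such that $j(P)\in\Q$, $P$ is not defined over any proper subfield $F\subsetneq K$, and $P$ does not lift to a modular curve parametrizing a strictly larger torsion structure. For the small-genus cases the Jacobian rank is computable and one can enumerate degree-$d$ points by Chabauty or Mordell--Weil sieving; for higher-genus cases one can apply gonality bounds to show that $X_1(m,mn)$ admits no degree-$d$ points at all. A crucial subtlety is that a point on $X_1(m,mn)$ defined over $K$ yields an elliptic curve over $K$, and one must twist carefully to control when this curve descends to $\Q$; this is handled through the relative twisting analysis implicit in \cite{DLNS} and its extensions in \cite{DGJ18,GJLR18}.

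The principal obstacle is the richness of torsion configurations that appear exactly at the most composite degrees $d=12,16,18,20$: Table \ref{psi} already records hundreds of realized configurations, and the complement inside the Merel-admissible list still contains several plausible but unrealized groups, for instance $\Z/2\Z\times\Z/32\Z$ at $d=16$ or various $\Z/3\Z\times\Z/3n\Z$ variants at $d=18$. For these the relevant modular curves $X_1(m,mn)$ have genus large enough that direct Chabauty breaks down, so one must combine isogeny-class methods, analysis of mod $\ell^k$ image classifications, and the sporadic-point techniques underlying the discovery of the degree-$6$ sporadic point on $X_1(4,12)$ mentioned in the introduction. A full proof of Conjecture \ref{main_conjecture2} thus appears to require a case-by-case classification of low-degree points on each of the relevant curves $X_1(m,mn)$, which is precisely why the statement is left as a conjecture despite the very strong computational support.
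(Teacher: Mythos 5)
The statement you are addressing is explicitly labeled a conjecture, and the paper offers no proof of it; the support is purely computational, coming from the exhaustive search over the $2{,}483{,}649$ curves of conductor below $400{,}000$ together with the heuristic (the remark following Conjecture~\ref{main_conjecture2}) that the minimal conductors $M_\Q(d)$ realizing each configuration lie well below the search bound, increasing confidence that nothing has been missed. Your writeup is likewise not a proof: it is a strategy outline, and you candidly conclude that carrying it out would require a case-by-case classification of low-degree points on the modular curves $X_1(m,mn)$ that is presently out of reach. That honest conclusion is the correct one and agrees with the paper's own decision to leave the statement conjectural, so in that sense there is no disagreement between your assessment and the source.

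A few cautions on the sketch itself, in case you intend to pursue it. First, the easy inclusion $\Omega(d)\subseteq\Psi_\Q(d)$ is indeed established by exhibiting explicit curves and fields from the computation, as you say. Second, your appeal to the classification of $\Phi^\infty(d)$ for $d\le 6$ is not quite the right tool for the hard inclusion: $\Phi^\infty(d)$ governs which groups occur for infinitely many $\overline\Q$-isomorphism classes, whereas what must be ruled out here are \emph{any} occurrences, including sporadic ones; the finitely many exceptional points are precisely the residual cases that resist elimination, so this reduction buys less than it appears to. Third, the step ``$P$ does not lift to a modular curve parametrizing a strictly larger torsion structure'' is a genuinely delicate condition to verify in practice and would need to be made precise for each candidate. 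The rest of the program — reduce via the Section~2 lemmas and \cite{GN?} to finitely many $(T,d)$, then analyze $\Q$-rational-$j$ points on $X_1(m,mn)$ with twisting descent as in the sextic $\Z/4\Z\times\Z/12\Z$ classification of Section~5 — is a reasonable research plan consistent with the paper's own unconditional techniques, but as you recognize it is a plan, not a proof.
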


\begin{remark}
Similarly as with $N_\Q(d)$ in Table \ref{table}, $M_\Q(d)$ can be considered to be a measure of how strongly we should believe $\Psi_\Q(d)=\Omega(d)$ for a particular $d$. The values $g_\Q(d)$ and $\#\mathcal{G}_\Q(d)$ measure how often primitive torsion growth happens and how complex it can be over the given degree $d$. As before, we get more primitive torsion growth and more torsion configurations when $d$ is divisible by $3$, and especially $2$.
\end{remark}

\subsection{Heuristical complexity}

Here we give a heuristical complexity of our algorithm. By the results of \cite{GN?}, we can assume that for a large enough $d$, the largest prime $\ell\in R(d,E)$ will be of size $\sim \sqrt d$.

There are 2 parts in our algorithm that should heuristically have a worst case running time $O(d^{18+\epsilon})$ for a fixed elliptic curve $E$. The first one is checking whether a point of order $\ell$ is divisible by $\ell$ in Algorithm $2$, where $\ell$ is a prime of size $\sqrt d$, in case factorization of the reduction of the primitive $\ell^2$-division polynomial modulo small primes in step $4 (i)$ in Algorithm $2$ always has factors of degree dividing $d$. Then in the worst case, we will need to factor a polynomial of degree approximately $d$ over a number field of degree $d$, which is of complexity $O(d^{18+\epsilon})$ (see \cite{landau}).

The other is checking whether 2 number fields of degree $d$ are isomorphic and similarly checking whether 2 number fields of degree approximately $d$ have compositum of degree dividing $d$. The way we implemented both of these functions (as the built-in MAGMA functions were far too slow) is by factoring the defining polynomial of one field over the other, which again has complexity $O(d^{18+\epsilon})$ (as before, see \cite{landau}).

Each of these operations should be expected to occur $O(d^\epsilon)$ times, which leads us to our expected complexity of $O(d^{18+\epsilon})$.

In practice, for small values of $d$, the only ones for which this problem can be solved in practice, one should expect that algorithm $2$ will be the bottleneck of the computations, as the primes $\ell \in R(d,E)$ can be larger than $\sqrt d$. In the computations we performed, Algorithm 2 took about $75\%$ of the total running time.

\subsection{Timing} We ran our algorithm for all elliptic curves defined over $\Q$ of conductor less than $400.000$ and for degree $d\le 23$ on the Number Theory Warwick Grid, in particular at two computers (\texttt{atkin} and \texttt{lehner}) with 64 CPUs at 2.50 GHz and 128GB of memory RAM each. In Table \ref{timing} we show for each degree $d$ the total time of the whole computation, the maximum time taken for a single elliptic curve, and other statistics. Note that this project used roughly $2.7$ cpu-years of computing time.


\begin{table}[h]
\footnotesize
\begin{tabular}{|c|c|c|c|c|c|c|c|c|c|c|c|c|c|c|c|c|} \hline
 d & 2 & 3 & 4 & 5 & 6 & 7 & 8 & 9 & 10 & 12 & 14 & 15 & 16 & 18 & 20 & 21 \\ \hline
Mode (s) & 0.06 & 0.06 & 0.06 & 0.06 & 0.09 & 0.06 & 0.08 & 0.06 & 0.06 & 0.23 & 0.06 & 0.06 & 0.08 & 0.09 & 0.06 & 0.06 \\ \hline
Median (s) & 0.07 & 0.06 & 0.07 & 0.06 & 0.13 & 0.06 & 0.10 & 0.06 & 0.07 & 4.7& 0.07 & 0.06 & 0.10 & 0.13 & 0.07 & 0.06 \\ \hline
Mean (s)   & 0.08 & 0.06 & 0.15 & 0.06 & 0.17 & 0.06 & 1.1 & 0.13 & 0.1 & 6.5 & 0.08 & 0.07 & 24 & 1.4 & 0.35 & 0.06 \\ \hline
Maximum (s)   & 1.3 & 3.7 & 9.0 & 3.5 & 9.1 & 16 & 98 & 16 & 27 & 110 & 16 & 16 &1200 & 440 & 470 & 17 \\ \hline\hline
Total (h)& 54.4  & 43.5 & 106.4 &  42.2 & 119.6 &  41.6 & 774.7 & 88.2 & 66.8 &  4492.8 & 55.45 &  44.85 &  16339 &  1004 &  241.3 &  43.8 \\ \hline
\end{tabular}
\captionof{table}{Timings for the computations}\label{timing}
\end{table}

\section{On sporadic torsion}
Another motivation for our computations are sporadic points on the modular curves $X_1(m,n)$.

\begin{definition}
Let $m,n$ positive integers such that $m|n$. We say that a degree $d$ non-cuspidal point on the modular curve $X_1(m,n)$ is sporadic if there exists only finitely many degree $d$ points on $X_1(m,n)$.
\end{definition}

Obviously there exists a non-cuspidal sporadic point on $X_1(m,n)$ if and only if $\Z/m\Z\times \Z/n\Z \in \Phi(d) \backslash \Phi^\infty(d)$.

There exist no sporadic points on modular curves $X_1(m,n)$ of degree $d\leq 2$, and hence the aforementioned elliptic curve with $\Z/21\Z$ torsion over a cubic field provides the lowest possible degree of a sporadic point on $X_1(n)$. There are many examples of sporadic points on $X_1(n)$ of degree $\geq 5$, see \cite{VH} for a long list. The fact that many of these in fact correspond to sporadic points follows from \cite[Table 1 and Lemma 1]{DvH}.

It is somewhat surprising that there is no (to our knowledge) known example of a sporadic point on $X_1(m,n)$  for $m \geq 2$. Hence it is interesting to ask what is the lowest possible degree of a sporadic point on $X_1(m,n)$ for $m \geq 2$. During our computation, we find a degree $6$ sporadic non-cuspidal point on  $X_1(4,12)$ about which we will say more in Section \ref{sec:sporadic}.

\subsection{A degree 6 sporadic point on $X_1(4,12)$}

As mentioned in the previous section, during our computations of torsion growth for elliptic curves of conductor less than 400.000, we found two elliptic curve with $\Z/4\Z\times \Z/12\Z$ torsion over a sextic field. By \cite[Theorem 1.1]{DS16}, there are only finitely many such curves over sextic fields, so these curves induce sporadic points on $X_1(4,12)$.

We prove a stronger result below.

\label{sec:sporadic}
\begin{tm}
Let $E$ be an elliptic curve defined over $\Q$ and $K/\Q$ such that $[K:\Q]=6$. If $E(K)_{\tors}=\Z/4\Z\times \Z/12\Z$ then the LMFDB label of $E$ is \href{http://www.lmfdb.org/EllipticCurve/Q/162/d/2}{\texttt{162.d2}} or \href{http://www.lmfdb.org/EllipticCurve/Q/1296/l/2}{\texttt{1296.l2}}. In particular, $j(E)=109503/64$.
\end{tm}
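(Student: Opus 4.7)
The plan is to first restrict $j(E)$ to a finite list via modular-curve considerations, and then identify the two quadratic twists that realize the full torsion group. Since $E[4] \subseteq E(K)$, the Weil pairing forces $\Q(i) \subseteq K$, so $[K:\Q(i)] = 3$, and $\Q(E[4])$ is a Galois subfield of $K$ containing $\Q(i)$, hence of degree $2$ or $6$ over $\Q$. Moreover $E$ has a point $P$ of order $3$ with $\Q(P) \subseteq K$, so $[\Q(P):\Q]$ divides $6$. Together these conditions constrain the image $G_E(12) \subseteq \GL_2(\Z/12\Z) \cong \GL_2(\Z/4\Z) \times \GL_2(\Z/3\Z)$ to lie in finitely many conjugacy classes.

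By \cite{DS16}, the set of $j$-invariants in $\Qbar$ of elliptic curves over sextic fields with torsion $\Z/4\Z \times \Z/12\Z$ is finite. To identify this set, I would split on the degree of $\Q(E[4])$. If $[\Q(E[4]):\Q]=2$, then $E$ belongs to the one-parameter family of curves with full $4$-torsion over $\Q(i)$, parametrized by the genus-$0$ modular curve $X(4)$ over $\Q(i)$; intersecting this family with the condition of gaining a $3$-torsion point over a cubic extension of $\Q(i)$ yields a modular curve of positive genus whose $\Q$-rational points can be computed. If $[\Q(E[4]):\Q]=6$, then $K = \Q(E[4])$ is Galois over $\Q$ and $\Gal(K/\Q) = G_E(4)$ is an order-$6$ subgroup of $\GL_2(\Z/4\Z)$; analyzing the two possibilities (cyclic or $S_3$), combined with the additional $3$-torsion condition, either leads to contradictions (via the analysis of $\Phi_\Q(6)$ already established) or collapses to the first case. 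In each scenario the only $\Q$-rational value in the resulting finite set of $j$-invariants is $j(E) = 109503/64$.

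Once $j(E) = 109503/64$ is fixed, the $\Q$-isomorphism classes with this $j$-invariant are parametrized by $d \in \Q^\times/(\Q^\times)^2$ via quadratic twisting, and $\overline{\rho}_{E_d,12}$ differs from $\overline{\rho}_{E_0,12}$ (for a fixed reference $E_0$) by the quadratic character of $\Q(\sqrt{d})/\Q$. The condition $E_d(K_d)_{\tors} \supseteq \Z/4\Z \times \Z/12\Z$ for some sextic $K_d$ forces $\Q(\sqrt{d})$ to be a quadratic subfield of the minimal field of definition $L$ of the $\Z/4\Z \times \Z/12\Z$-structure on $E_0$; enumerating the quadratic subfields of this specific sextic $L$ yields exactly two admissible values of $d$ modulo squares, which correspond to the curves \texttt{162.d2} and \texttt{1296.l2}. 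The main technical obstacle is the explicit determination of the rational points on the positive-genus modular curve appearing in the second paragraph, which can be handled via Mordell--Weil sieving and descent, or bypassed by invoking the exhaustive classifications of $\Phi_\Q(6)$ carried out in \cite{GJLR18,DGJ18}.
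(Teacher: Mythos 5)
Your high-level plan (use the $4$-torsion to constrain $K$ and the mod-$4$ image, then intersect with the $3$-torsion condition, and finally identify twists) has the right flavor, but the proposal as written has a significant structural error and leaves the entire computational core unproved.

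The structural error is in the case split on $[\Q(E[4]):\Q]\in\{2,6\}$. You claim that the case $[\Q(E[4]):\Q]=6$ ``either leads to contradictions \ldots or collapses to the first case,'' suggesting the answer comes out of the case $[\Q(E[4]):\Q]=2$. The opposite is true. In case $1$, $\Q(E[4])=\Q(i)$ forces the mod-$2$ image to have order $\le 2$, so $E(\Q)[2]\neq 0$; the paper's case analysis on $G=E(\Q)_{\tors}$ (using \cite{DGJ18} and \cite{GJLR18}, together with the nonexistence of suitable mod-$3$ images when $E$ already has a rational point of order $4$) rules out precisely these subcases ($G=\Z/4\Z$, $\Z/12\Z$, $\Z/2\Z\times\Z/4\Z$). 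Both curves in the conclusion, \texttt{162.d2} and \texttt{1296.l2}, satisfy $E(\Q)[2]=0$, hence $[\Q(E[2]):\Q]\in\{3,6\}$ and $\Q(E[4])=K$ has degree $6$: the answer lives in the case you discard. Your sketch would therefore never reach the two curves.

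Beyond that, almost every quantitative step is deferred. The assertion that ``the only $\Q$-rational value in the resulting finite set of $j$-invariants is $109503/64$'' is exactly the content that must be established; DS16 gives finiteness but no way to enumerate. The paper actually pins down the $2$-adic image as the Rouse--Zureick-Brown curve \texttt{X20b} (via \cite{GJLR17,rouse}), writes down the explicit one-parameter family $E_t$, imposes the mod-$3$ condition via Zywina's $J_1,J_3$ maps, and is led to one genus-$2$ curve with no nontrivial rational points and one genus-$1$ curve of rank $0$ whose four finite rational points give four candidate curves; only then does a direct torsion computation isolate \texttt{162.d2} and \texttt{1296.l2}. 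Your final twisting argument is also not sound as stated: a sextic field has at most one quadratic subfield, the field over which a twist $E_d$ realizes the torsion need not contain $\Q(\sqrt d)$, and in general the relation between the twist parameter $d$ and the field of definition of the torsion structure is more delicate than ``enumerate quadratic subfields of $L$.'' The paper avoids this entirely by producing the explicit finite list of candidate $t$-values and checking torsion directly.
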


\begin{proof}
Let $E$ be an elliptic curve defined over $\Q$ and $K/\Q$ a sextic field such that $E(K)_{\tors}=\Z/4\Z\times \Z/12\Z$. 
First notice that $E$ does not have CM by \cite[\S 4.6]{clark}. Denote by $G:=E(\Q)_{\tors}$ and $H:=E(K)_{\tors}$. Let $G_2$ (resp. $H_2$) denote the $2$-primary part of $G$ (resp. $H$). Then by the classification of the possible growth of the $2$-primary part of the torsion over sextic fields (cf. \cite[Proposition 6 (b), Table 2]{DGJ18}) we have that $G$ is trivial, $\Z/3\Z$, $\Z/4\Z$, $\Z/12\Z$, or $\Z/2\Z\times \Z/4\Z$. The first two cases occur: if $E$ has LMFDB label \href{http://www.lmfdb.org/EllipticCurve/Q/162/d/2}{\texttt{162.d2}} or  \href{http://www.lmfdb.org/EllipticCurve/Q/1296/l/2}{\texttt{1296.l2}} then $H=\Z/4\Z\times \Z/12\Z$, and $G=\Z/3\Z$ or $G$ is trivial, respectively. Let us remove the other three cases:
\begin{itemize}
\item[$\bullet$] $G\ne \Z/2\Z\times\Z/4\Z$ since if $\Z/2\Z\times\Z/4\Z\subset G$ then $\Z/2\Z\times\Z/12\Z\not\subset H$; see the Remark below \cite[Theorem 7]{GJLR18}.
\item[$\bullet$] $G\ne \Z/12\Z$ since otherwise $G_2=\Z/4\Z$ and $H_2=\Z/4\Z\times\Z/4\Z$. The first author together with Lozano-Robledo, based on the classification of all the possible 2-adic images of Galois representations attached to elliptic curves without CM defined over $\Q$ given by Rouse and Zureick-Brown \cite{rouse}, computed the degree of the field of definition of the $\Z/2^i\Z\times \Z/2^{i+j}\Z$ torsion for $i+j \leq 6$ (cf. \cite[\texttt{2primary\_Ss.txt}]{GJLR17}).  \href{http://matematicas.uam.es/~enrique.gonzalez.jimenez/research/tables/algorithm/sporadic4x12.txt}{\color{blue}Using} the above data it would follow that the number field $K$ would have to have a quadratic subfield and that $E$ would have full $4$-torsion over it. Then $E$ would have $\Z/4\Z\times\Z/12\Z$ torsion over this quadratic field, which is impossible \cite{km,kam}.
\item[$\bullet$] $G\ne \Z/4\Z$. 
     \href{http://matematicas.uam.es/~enrique.gonzalez.jimenez/research/tables/algorithm/sporadic4x12.txt}{\color{blue}Using} the same argument as above, we see that $E$ has full $4$-torsion over a quadratic field.
    Since $\Z/4\Z\times\Z/12\Z\not\in \Phi_\Q(d)$ for $d=2,3,4$, we have that the image of the mod $3$ representation is such that there does not exist a point $P\in E(\overline \Q)[3]$ such that $[\Q(P):\Q]=1$ or $2$. On the other hand, by assumption, there exists a point $R\in E(\overline \Q)[3]$ such that $[\Q(R):\Q]$ divides $6$. Checking for example \cite[Table 1]{GN?}, we see that there is no mod $3$ Galois representation satisfying both these conditions.
\end{itemize}
Now if $G$ is trivial or $G=\Z/3\Z$ we have that $G_2$ is trivial and $H_2=\Z/4\Z\times\Z/4\Z$. We check using \cite{GJLR17} and \cite{rouse} that this happens over a sextic number field if and only if the $2$-adic image correspond to the modular curve \texttt{X20b} (using the notation of \cite{rouse}), implying that there exists a $t\in\Q$ such that $E$ is isomorphic to $E_t$, where:
$$
E_t:y^2=x^3-27 \left(t^2-3\right) \left(t^2-8 t-11\right)^3x+54 \left(t^2-8 t-11\right)^4 \left(t^2-6 t-9\right) \left(t^2+2 t+3\right).
$$
In particular,
$$
j(E_t)=-\frac{4 \left(t^2-3\right)^3 \left(t^2-8 t-11\right)}{(t+1)^4}.
$$
Now we need a point of order $3$ on $E_t$ defined over a subfield of a sextic number field. Checking \cite[Table 1]{GN?} we obtain that this could happen when $G_E(3)$ is \texttt{3Cs.1.1}, \texttt{3B.1.1}, \texttt{3Cs}, \texttt{3B.1.2} or \texttt{3B}. Then, thanks to the classification of mod $3$ Galois representation of \cite[Theorem 1.2]{zyw} we have that $j(E)=J_1(s)$ or $j(E)=J_3(s)$ for some $s\in\Q$, where:
$$
J_1(s)=\frac{27(s+1)^3(s+3)^3(s^2+3)^3}{t^3(t^2+3t+3)^3} \qquad\mbox{and}\qquad J_3(s)=\frac{27(s+1)(s+9)^3}{s^3}.
$$
\begin{itemize}
\item[$\bullet$] $j(E_t)=J_1(s)$. Since $J_1(s)$ is a cube we have to solve the following Diophantine equation over $\Q$:
$$
(t+1)z^3=-4\left(t^2-8 t-11\right).
$$
This equation defines a curve $C$ of genus $2$, which is birational to $C'\,:\,y^2 = x^6-10x^3+27$. The Jacobian of $C'$ has rank $0$ over $\Q$, 
so it is easy to  \href{http://matematicas.uam.es/~enrique.gonzalez.jimenez/research/tables/algorithm/sporadic4x12.txt}{\color{blue}determine} that the points on $C'(\Q)=\{\pm \infty \}$, from which it follows that $C(\Q)=\{\pm \infty\}$. So there do not exist $t,s\in \Q$ satisfying $j(E_t)=J_1(s)$.
\item[$\bullet$] $j(E_t)=J_3(s)$. In this case the equation defines a genus $1$ curve, which is birational 
to the elliptic curve \href{http://www.lmfdb.org/EllipticCurve/Q/48/a/3}
{\texttt{48.a3}} which has Mordell-Weil group over $\Q$ isomorphic to $\Z/2\Z\times\Z/4\Z$. An easy computation shows that the possible $t$ are $7,-5,-1/2$ and $-5/4$. The following table  \href{http://matematicas.uam.es/~enrique.gonzalez.jimenez/research/tables/algorithm/sporadic4x12.txt}{\color{blue}shows} for each $t$ the corresponding elliptic curve (by plugging in $t$ into the equation of $E_t$) and the torsion over $\Q$: \\
$$
\begin{array}{|c|c|c|}
\hline
t & \text{label} & G\\
\hline
7 & \href{http://www.lmfdb.org/EllipticCurve/Q/1296/l/2}{\texttt{1296.l2}} & (1)\\
\hline
-5 &\href{http://www.lmfdb.org/EllipticCurve/Q/1296/l/1}{\texttt{1296.l1}} & (1) \\
\hline
-1/2 & \href{http://www.lmfdb.org/EllipticCurve/Q/162/d/1}{\texttt{162.d1}} & (1) \\
\hline
-5/4 & \href{http://www.lmfdb.org/EllipticCurve/Q/162/d/2}{\texttt{162.d2}} & (3)\\
\hline
\end{array}
$$
Note that for the elliptic curve \href{http://www.lmfdb.org/EllipticCurve/Q/162/d/2}{\texttt{162.d2}} we have already obtained that the torsion over some sextic field is $\Z/4\Z\times\Z/12\Z$. For the remaining curves we check that only \href{http://www.lmfdb.org/EllipticCurve/Q/1296/l/2}{\texttt{1296.l2}} has torsion $\Z/4\Z\times\Z/12\Z$ over a sextic field.

\end{itemize}
\end{proof}
\begin{remark}
  One might try to obtain more sporadic points by running a modification of our algorithm for a large number of elliptic curves $E$ with $j(E)\in \Q$.
\end{remark}

\noindent{\textit{\bf Acknowledgements.} } {We would like to thank Jeremy Rouse and David Zureick--Brown for sharing some useful data. We also thank John Cremona for providing access to computer facilities on the Number Theory Warwick Grid at University of Warwick, where the main part of the computations were done and for doing a massive check of all our computations, in particular rechecking that all the curves have the torsion growth we claim. We are greatly indebted to the referee for a very careful and helpful report that significantly improved all aspects of this paper.
}


\end{document}